\theoremstyle{plain}
\newtheorem{theorem}{Theorem} %
\newtheorem{lemma}[theorem]{Lemma}
\newtheorem{proposition}[theorem]{Proposition}
\newtheorem{corollary}[theorem]{Corollary}
\theoremstyle{remark}
\newtheorem{assumption}{Assumption}
\newtheorem{definition}[theorem]{Definition}
\newtheorem{construction}[theorem]{Construction}
\newtheorem*{example}{Example}
\newtheorem*{remark}{Remark}
\let\textcite\cite %
\let\parencite\citep
\let\barecite\cite  %
\let\mathscr\mathcal
\DeclareMathOperator{\E}{\mathbb{E}}
\DeclareMathOperator{\Var}{\mathrm{Var}}
\DeclareMathOperator{\Cov}{\mathrm{Cov}}
\let\Pr\relax
\DeclareMathOperator{\Pr}{\mathbb{P}}
\newcommand*{\as}{a.s.\@\xspace}
\newcommand*{\iid}{i.i.d.\@\xspace}
\newcommand*{\wrt}{w.r.t.\@\xspace}
\newcommand{\xmiddle}[1]{\,\middle#1\,}
\newcommand{\xmid}{\mathop{|}} %
\DeclareRobustCommand{\crefnosort}[1]{%
  \begingroup\@cref@sortfalse\cref{#1}\endgroup
}
\crefname{assumption}{Assumption}{Assumptions}
\crefname{construction}{Construction}{Constructions}
\begin{document}

\begin{frontmatter}
\title{Gradient Estimation\\ via Differentiable Metropolis--Hastings}
\runtitle{Differentiable Metropolis--Hastings}

\begin{aug}
\author[A]{\fnms{Gaurav}~\snm{Arya}\orcid{0000-0003-1662-3037}},
\author[B]{\fnms{Moritz}~\snm{Schauer}%
\orcid{0000-0003-3310-7915}}
\and
\author[B]{\fnms{Ruben}~\snm{Seyer}\ead[label=e3]{rubense@chalmers.se}\orcid{0000-0001-8661-3729}}
\address[A]{Department of Electrical Engineering and Computer Science, MIT}
\address[B]{Department of Mathematical Sciences,
Chalmers University of Technology and University of Gothenburg\printead[presep={,\ }]{e3}}
\end{aug}

\begin{abstract}

Metropolis--Hastings estimates intractable expectations --- can differentiating the algorithm estimate their gradients? %
The challenge is that Metropolis--Hastings trajectories are not conventionally differentiable due to the discrete accept/reject steps.
Using a technique based on recoupling chains, our method
differentiates through the Metropolis--Hastings sampler itself, allowing us to estimate gradients with respect to a parameter of otherwise intractable expectations.
Our main contribution is a proof of strong consistency and a central limit theorem for our estimator under assumptions that hold in common Bayesian inference problems.
The proofs augment the sampler chain with latent information, and formulate the estimator as a %
stopping functional of the tail of this augmented chain.
We demonstrate our method on examples of Bayesian sensitivity analysis and optimizing a random walk Metropolis proposal.
\end{abstract}

\end{frontmatter}

\section{Introduction}
Markov chain Monte Carlo (MCMC) methods are used to estimate expectations by appealing to an ergodic theorem,
\begin{equation}\label{eq:markovlln}
    \pi_\theta f = \E_{X \sim \pi_\theta}[f(X)] = \lim_{N \to \infty} \frac{1}{N} \sum_{i=0}^{N-1} f(X_i),
\end{equation}
where \(\{X_n\}_{n \in\mathbb N_0}\) is a Markov chain engineered to converge to the target distribution \(\pi_\theta\), in particular when \(\pi_\theta\) is only known up to an intractable normalizing constant.
For example, in Bayesian inference, quantities such as posterior expectation, variance, or probabilities are all integrals with respect to the posterior distribution commonly estimated in this manner. A wide class of such MCMC methods employs Metropolis--Hastings (MH) accept/reject steps to preserve the target distribution \parencite{metropolis_equation_1953,hastings_monte_1970,tierney_note_1998}.
Examples used in practice are random walk Metropolis, Metropolis-adjusted Langevin algorithm (MALA) \parencite{rossky_brownian_1978,besag_comments_1994,roberts_exponential_1996} and certain versions of Hamiltonian Monte Carlo (HMC) \parencite{duane_hybrid_1987,neal_mcmc_2011}.

If the target distribution \(\pi_\theta\) depends on a parameter \(\theta\) as above, it is natural to ask how the expectations change with respect to the parameter, quantified as the derivative at a parameter value of interest
\begin{equation*}
    \left.\frac{\partial}{\partial \theta}\E_{X \sim \pi_\theta}[f(X)]\right|_{\theta = \theta_0},
\end{equation*}
which itself must be estimated.
This problem of Monte Carlo gradient estimation arises in for example optimization of stochastic systems, sensitivity analysis and experimental design, as well as reinforcement learning and variational inference in machine learning \parencite{mohamed_monte_2020}.
Particularly within the machine learning literature, there has been interest in differentiating through Monte Carlo samplers.
Unfortunately, the MH sampler trajectory is not conventionally differentiable due to the discrete accept/reject steps.
Instead, practitioners use unadjusted methods \parencite{doucet_differentiable_2023,zhang_differentiable_2021}%
, ignore the discrete contributions and work with biased estimates \parencite{chandra_designing_2022,campbell_gradient_2021}%
, or enumerate the state space
\parencite{chandra_storytelling_2024,chandra_acting_2023}.

In this paper following up on \textcite{arya_differentiating_2023-1}, we obtain a gradient estimator by differentiating through the MH sampler itself, successfully accounting for the discreteness within the framework of \emph{stochastic perturbation analysis} (SPA) \parencite{bremaud_maximal_1992,fu_smoothed_1994,fu_conditional_1997}.
SPA-type gradient estimators weigh together several alternative trajectories of the chain.
By introducing a recoupling coupling for the alternatives \parencite{lindvall_lectures_1992,thorisson_coupling_2000}, we exploit the stochasticity to make the estimator stable and computationally tractable. %
Our main contribution is a theoretical convergence guarantee under assumptions expected to hold for models commonly considered in Bayesian inference.
Under these assumptions, we establish that our estimator essentially inherits the ergodicity of the underlying MH chain, and show that this entails strong consistency of as well as a central limit theorem for our gradient estimates.

Three major classes of gradient estimators appear in the literature \parencite{mohamed_monte_2020}:
\emph{pathwise} estimators \cite{glasserman_gradient_1991,kingma_auto-encoding_2014}, also known as the \emph{reparameterization trick}, fully extract the parameter dependency from the target distribution, yielding a very simple gradient estimator.
However, a tractable differentiable reparameterization is not available for all targets, and the method requires that \(f\) is differentiable.
\emph{Likelihood ratio} methods \parencite{glynn_likelihood_1990,glynn_likelihood_1995,glynn_likelihood_2019,rhee_lyapunov_2023}, also known as score function methods or REINFORCE, interchange derivative and expectation to directly differentiate the target density, yielding a reweighted cost \(f(x)\nabla_\theta \log \pi_\theta(x)\).
This direct approach can even be applied to unnormalized target densities if the gradient of the log-normalizing constant is additionally estimated.
However, the variance of the method deteriorates with increasing state space dimensionality \parencite{mohamed_monte_2020}.
Furthermore, \(f\) is treated like a black box, which has the benefit of wide applicability but the drawback of ignoring possible variance reduction from structural properties of \(f\).
\emph{Measure-valued derivatives} (MVD) \parencite{heidergott_measure_2006,heidergott_measure-valued_2006,heidergott_measure-valued_2008,heidergott_gradient_2010} estimate the gradient as a weighted difference of a target-dependent measure decomposition.
At the cost of simulating multiple alternatives, their difference-based approach allows nondifferentiable \(f\) while accounting for structure.
However, finding a tractable measure decomposition is nontrivial.

The performance of these gradient estimators in terms of variance and computational cost depends on the target \(\pi_\theta\) and the function \(f\) under consideration, and in general none of the estimators are universally better than any other \parencite{mohamed_monte_2020}.
Our estimator does not fit cleanly within the framework of any of the classes, aiming for the generality of the likelihood ratio method while using an alternative simulation approach similar to MVD.
Working with the MH dynamics replaces the need for target-specific tricks with a proposal-specific coupling.
We source suitable couplings for MH kernels from the recent extensive literature on this topic \parencite{wang_maximal_2021,oleary_couplings_2021,papp_new_2022,oleary_metropolis-hastings_2023}, which has been stimulated in part by the closely related application of debiased MCMC \parencite{glynn_exact_2014,jacob_unbiased_2020,middleton_unbiased_2020}, where the recoupling property is used to derive a bias correction term.
Couplings have also appeared in both SPA literature \parencite{bremaud_maximal_1992,dai_perturbation_2000} and MVD literature \parencite{heidergott_measure-valued_2006}, where they have been used for more efficient simulation, but to our knowledge the explicit use of general recoupling couplings to construct a consistent estimator for general state spaces is novel.

\subsection{Related work on Markov chain gradient estimation}
The aforementioned classes of gradient estimators have also been applied to Markov chains, and in this section, we contrast our estimator with some prior work.

As an extension of pathwise estimators, SPA has previously been used for consistent gradient estimation in among others queuing theory, financial modeling, and optimization of stochastic systems \parencite{fu_conditional_1997}.
Nevertheless, the usual formalization by generalized semi-Markov processes does not apply to our setting, as their state space is taken to be at most countable.
SPA has been extended to more general Markov chains, notably by \textcite{dai_perturbation_2000} where even couplings are used for efficient simulation, but too strong assumptions on countability, boundedness of the performance \(f\), and uniform ergodicity are imposed to obtain consistency.
We avoid these restrictions by exploiting the convergence properties of the coupling
both in simulation and in the proof of consistency.

Likelihood ratio methods have been applied to the Markov chain itself  \parencite{glynn_likelihood_1995,glynn_likelihood_2019}, estimating the gradient as ``a sequence of finite horizon expectations''.
Although their assumptions are similar to ours, they differ in one critical point: the Markov kernel is assumed to be absolutely continuous with respect to some reference measure, which is not the case for the MH kernel due to the rejection dynamics.
Furthermore, they consider a weaker notion of consistency than us, so that unlike our estimator the gradient estimator for a single infinite chain does not converge almost surely to the true derivative, but rather only in mean.

Measure-valued derivative theory has been applied to Markov chains through a ``product rule'' for kernels \parencite{heidergott_measure-valued_2006,heidergott_measure-valued_2008,heidergott_gradient_2010}.
These estimators apply to non-differentiable kernels, and use a similar structure of alternative chains called ``phantoms'' in the estimator, with a similar approach to efficient simulation of ``phantoms'' through pruning or coupling as in our algorithm.
Furthermore, our SPA weight computations are related to the corresponding measure-valued derivative for a Bernoulli random variable.
Unfortunately, the kernel factorization we introduce below to make explicit use of the coupling violates smoothness assumptions of measure-valued derivatives, so we are unable to immediately apply the theory.
Nevertheless, we hope that our recoupling trick also can be transferred to infinite-horizon measure-valued derivatives.

\subsection{Notation}
Let \((\mathsf X,\mathscr B_{\mathsf X}, \mu_0)\) be a Polish (separable completely metrizable) space equipped with the Borel \(\sigma\)-algebra and a reference measure \(\mu_0\).
This setting includes the familiar Euclidean space with Lebesgue measure but also allows for more general spaces.
We will without loss of generality assume that the parameter of interest \(\theta\) is one-dimensional and lies in some compact interval \(\Theta \subset \mathbb R\) (usually a neighborhood of some \(\theta_0\)).
The extension to multi-dimensional parameter vectors can then be done by applying the one-dimensional results for directional derivatives.

We write \(x \wedge y\) for the minimum of \(x\) and \(y\).
Given a probability measure \(\mu\), we denote the countable product
Let \(\delta_x(dy): \mathsf X \times {\mathcal B}_{\mathsf X} \to [0,1]\) be the Dirac measure, that is \(\delta_x(E) = \boldsymbol{1}_{\{x\}}(E)\) for all \(E \in {\mathcal B}_{\mathsf X}\).
For a general Markov kernel \(K(dy \xmid x) = K(x, dy): \mathsf X \times \mathscr B_{\mathsf Y} \to [0,1]\), we define by \(\mu K(A) = \int_{\mathsf X} K(A \xmid x)\mu(dx)\) the resulting measure when acting on a measure \(\mu\), and define by \(Kf(x) = \int_{\mathsf Y} f(y)K(dy \xmid x)\) the resulting function when acting on a measurable (integrable) function \(f\).
Hence kernels \(K_1(dy \xmid x): \mathsf X \times \mathscr B_{\mathsf Y} \to \mathsf [0,1], K_2(dz \xmid y): \mathsf Y \times \mathscr B_{\mathsf Z} \to [0,1]\) compose as \(K_1K_2(B \xmid x) = \int_{\mathsf Y} K_2(B \xmid y)K_1(dy \xmid x)\).
These notations may be combined and are associative.

\section{Differentiable Metropolis--Hastings estimator}
Recall that the MH chain \(\{X_n\}_{n \in \mathbb N_0}\) is defined as follows \parencite{metropolis_equation_1953,hastings_monte_1970,tierney_note_1998}:
suppose \(\pi_\theta\) is absolutely continuous with respect to \(\mu_0\), and \(g_\theta\) is an unnormalized \(\mu_0\)-density of \(\pi_\theta\).
Let \(q(dx^\circ \xmid x)\) be the proposal kernel used in the sampler which we assume has a density \(q(x^\circ \xmid x)\) with respect to \(\mu_0\) independent of \(\theta\),
and let \(\alpha_\theta(x' \xmid x)\) be the acceptance probability
\begin{equation*}
    \alpha_\theta(x' \xmid x) = \frac{g_\theta(x')q(x \xmid x')}{g_\theta(x)q(x' \xmid x)} \wedge 1.
\end{equation*}
The sampler proceeds by iterating the following kernel:
\begin{equation*}
    K_{\mathrm{MH}}(dx' \xmid x) = \alpha_\theta(x' \xmid x)q(dx' \xmid x) + \left[\int (1 - \alpha_\theta(x^\circ \xmid x))q(dx^\circ \xmid x)\right]\delta_{x}(dx').
\end{equation*}
Convergence of the sampler relies on some regularity conditions, the definitions of which are recalled in \cref{app:proofs.Kaug}.
A sufficient condition for \(\pi_\theta\)-irreducibility of the sampler is the positivity of the proposal density \(q\) for any pair of states, and a sufficient condition for aperiodicity is the ability to reject a move from any state \parencite[Section~7.3.2]{robert_monte_2004}.
The MH chain is Harris recurrent if it is \(\pi_\theta\)-irreducible \parencite[Lemma~7.3]{robert_monte_2004}.
Then, for integrable \(f\) we indeed have the Markov chain law of large numbers \eqref{eq:markovlln}
\as for any
starting state \(X_0\) \parencite[Theorem~7.4]{robert_monte_2004}.

For the estimator, we introduce a particular way to construct alternative chains to capture the impact of a perturbation of \(\theta\).
This requires that we specify how to run two chains in parallel.
Let \(q(dx^\circ \times dy^\circ \mid x, y)\) be a coupled proposal kernel, which to be a valid coupling must preserve the marginals and satisfy \(q(dx^\circ \times \mathsf X \mid x, y) = q(dx^\circ \mid x)\) and \(q(\mathsf X \times dy^\circ \mid x, y) = q(dy^\circ \mid y)\) for all \(x, y \in \mathsf X^2\).
We will use a reparameterized expression of the coupling, so that we may express \(y^\circ\) given \({x,x^\circ,y}\) as \(y^\circ = \Tilde{q}(\epsilon,x,x^\circ,y)\) with a deterministic function taking \(x,x^\circ,y\) and independent ``latent randomness'' \(\epsilon \) on \(([0,1],\mathscr B_{[0,1]},\eta)\).
The existence\footnote{Explicit knowledge of \(\Tilde{q}\) is not strictly necessary for implementation, although such a formulation is not only theoretically convenient but also practically useful \parencite{lew_adev_2023}.} of such a \(\Tilde{q}\) follows by standard disintegration and conditioning results \parencite[Lemma~4.22, Theorem~8.5]{kallenberg_foundations_2021}.

We are now ready to formulate the Differentiable MH (DMH) estimator, expanding on the algorithm first introduced by \textcite[Algorithm~2]{arya_differentiating_2023-1}.
The result is an SPA-type approach that intuitively takes the form of weighted counterfactuals (in the case where the parameter enters discretely, as in the accept/reject step).
For MH, the discrete counterfactuals come from the trajectory switching between acceptance and rejection or vice versa.
Although the trajectory itself changes discontinuously under parameter perturbations, by forcing the perturbations to occur in simulation and instead differentiating the conditional probabilities we obtain an estimate of the derivative \parencite{arya_automatic_2022,fu_conditional_1997,fu_smoothed_1994}.
We focus in the next definition on introducing the construction, with the proof of correctness contained within the later main results.
\begin{construction}[Differentiable Metropolis--Hastings estimator]\label{def:dmh}
Suppose the MH algorithm is run for \(N\) transitions starting from \(X_0\). 
The contribution to the DMH derivative estimate from a perturbation at the \(n\)th transition is the SPA estimator
\begin{equation*}
    W_n \sum_{k=1}^{N-n-1} \left[f(Y_{n,k}) - f(X_{n+k})\right],
\end{equation*}
where \(W_n\) is the weight and \(Y_{n,k}\) is the alternative chain started in \(n\) after \(k\) transitions, both quantities introduced below.
The full estimate is then the average over all transitions of these contributions.

Consider the \(n\)th state of the MH chain.
Suppose the accept/reject step is implemented by thresholding an independent \(U_n \sim \mathsf{Unif}[0,1]\) with the acceptance probability \(\alpha_\theta(X^\circ_n \mid X_n)\).
Then, inspecting \(X_n\), \(X^\circ_n\), and \(U_n\) reveals whether the primal chain accepted or rejected the proposed move, whence we obtain the weight as the derivative of the acceptance probability with a sign correction:
\begin{equation}\label{eq:weight}
    W_n = \frac{\partial}{\partial\theta}\alpha_\theta(X^\circ_n \mid X_n) \left(1 - 2 \cdot \bm{1}_{U_n \le \alpha_\theta(X^\circ_n \xmid X_n)}\right).
\end{equation}
It may however be the case that \(W_n = 0\), for example, if the proposed state is always accepted and no alternative is possible.
The alternative chain \(\{Y_{n,k}\}_{k \in \mathbb N}\) started from \(n\) can be split off by making the opposite decision to the primal in the transition:
\begin{align*}
    Y_{n,1} &= \begin{cases}
            X_n, & \text{if } U_n \le \alpha_\theta(X^\circ_n \xmid X_n)\\
            X^\circ_n, & \text{if } U_n > \alpha_\theta(X^\circ_n \xmid X_n)
        \end{cases}.
\end{align*}
Once the alternative is split, we continue using the reparameterized expression for the coupling to obtain the alternative as
\begin{equation}\label{eq:shadow}
\begin{aligned}
    Y_{n,k}^\circ &= \Tilde{q}(\epsilon_{n,k},X_{n+k},X^\circ_{n+k},Y_{n,k}),\ k=1,2,\dotsc\\
    Y_{n,k+1} &= \begin{cases}
        Y_{n,k}^\circ & \text{if } U_{n+k} \le \alpha_\theta(Y_{n,k}^\circ \xmid Y_{n,k})\\
        Y_{n,k} & \text{if } U_{n+k} > \alpha_\theta(Y_{n,k}^\circ \xmid Y_{n,k})
    \end{cases},\ k=1,2,\dotsc,
\end{aligned}
\end{equation}
where we chose to couple the accept/reject step according to a monotone coupling or common random numbers \parencite{glasserman_guidelines_1992}.
This procedure can be continued indefinitely.
\end{construction}

\subsection{Implementation details}
We implement\footnote{The code is available from \url{https://github.com/gaurav-arya/differentiable_mh}.} the DMH estimator for our numerical experiments in Julia \parencite{bezanson_julia_2017} using the package \texttt{StochasticAD.jl} \parencite{arya_automatic_2022}.
Given an implementation of a proposal coupling, this allows us to automatically obtain the procedure in \cref{def:dmh} from a straightforward implementation of the usual MH algorithm.
Nevertheless, the mathematical aspects treated in this paper are not dependent on the choice of any particular implementation.
In our previous work \textcite{arya_differentiating_2023-1} the description of the algorithm additionally incorporated specific details of weight computations and pruning (importance sampling between alternatives to deterministically bound the computational effort) present in \texttt{StochasticAD.jl}; we have here chosen to avoid such implementation details for clarity, although an efficient implementation would also consider for example the choice of pruning scheme and whether to use forward or reverse mode \parencite{radul_you_2022,becker_probabilistic_2024} automatic differentiation in gradient computations.

\section{Main results}\label{sec:main}
We begin by introducing our assumptions, which are directly linked to the different parts of the estimator.

\begin{assumption}\label{ass:MH}
    The MH chain is \(\pi_\theta\)-irreducible and \(\pi_\theta \lvert f \rvert < \infty\).
    (This implies that the MH chain is Harris recurrent and that the averages converge \as to \(\pi_\theta f\).)
\end{assumption}

\begin{assumption}\label{ass:target}
    The target unnormalized density \(g_\theta(x)\) is continuously differentiable\footnote{It is possible to weaken the differentiability assumptions at the cost of providing a more careful bound on the derivative as one runs into measurability concerns, see for example \textcite{lecuyer_unified_1990}.} in \(\theta \in \Theta\).
    Furthermore, for all \(\theta_0 \in \Theta\) we have
    \begin{equation*}
        \E_{X \sim \pi_{\theta_0}}\left[\sup_{\theta \in \Theta} \left\lvert \frac{\partial}{\partial\theta}\log \pi_\theta(X)\right\rvert^2\right] < \infty.
    \end{equation*}
\end{assumption}

\begin{assumption}\label{ass:performance}
    At least one of the following holds:
    \begin{enumerate}[label=(\alph*),ref={\ref{ass:performance}\alph*}]
        \item \(f\) is bounded on \(\mathsf X\),
            \label[assumption]{ass:performance.fbdd}
        \item the MH chain is geometrically ergodic with drift function \(V: \mathsf X \to [1, \infty]\)
            such that \(\pi_\theta V < \infty\) and \(\pi_\theta q V < \infty\),
            for which \(f\) satisfies \(\sup_{x \in \mathsf X} \lvert f(x) \rvert^{2+\gamma}/V(x) < \infty\) for some \(\gamma > 0\).
            \label[assumption]{ass:performance.geom}
    \end{enumerate}
\end{assumption}

\begin{assumption}\label{ass:coupling}
The coupling satisfies the following:
\begin{enumerate}[label=(\roman*)]
    \item It is \emph{faithful},\footnote{Terminology due to \textcite{rosenthal_faithful_1997}.} that is recoupled chains stay together after they first couple: \(q(dx^\circ \times dy^\circ \mid x, x) = q(dx^\circ \mid x)\delta_{x^\circ}(dy^\circ)\).
    \item The chains recouple in square-integrable time: let \(\tau_{x,y} = \inf_{n \in \mathbb N} \{X_n = Y_n \mid X_1 = x, Y_1 = y\}\) denote the recoupling time for parallel chains starting in \(x,y\), then we have for all \(x,y \in \mathsf X^2\) that \(\E[\tau_{x,y}^2] < \infty\).
\end{enumerate}
\end{assumption}

Essentially, \cref{ass:MH} yields the baseline convergence of MH, \cref{ass:target} yields the existence of the derivatives and moment bounds on weights in the estimator, and finally \cref{ass:performance,ass:coupling} yield moment bounds on the difference between chains and the applicability of the recoupling trick.
The assumption of geometric ergodicity is convenient and holds in many practical cases \parencite{roberts_geometric_1996,oliviero-durmus_geometric_2024}; although it could be possible to relax this with some additional work, we are interested in establishing a central limit theorem, where geometric ergodicity is a common sufficient condition \parencite{jones_markov_2004}.
We note that a drift function \(V\) with \(\pi_\theta V < \infty\) always exists for a geometrically ergodic Markov chain, such that the drift condition
\begin{equation}\label{eq:drift}
    (K_\mathrm{MH} V)(x) \le \alpha V(x) + \beta
\end{equation}
holds for some \(\alpha \in [0,1)\), \(\beta \in [0,\infty)\) \parencite{gallegos-herrada_equivalences_2023}.

The following examples describe couplings for two common variants of MH that satisfy the assumptions for use in the DMH:
\begin{example}[Independent MH]
    The simplest possible proposal is one that is independent of the current state, that is \(q(\cdot \xmid x) = q(\cdot)\).
    Then, the proposal has a trivial self-coupling \(\Tilde{q}(\epsilon,x,x^\circ,y) = x^\circ\) which always succeeds (and \(\epsilon\) can be ignored).
    If there exists a constant \(C\) such that \(\pi_\theta(x) \le C q(x)\) on the support of \(\pi_\theta\), then the sampler is uniformly ergodic \parencite[Theorem~7.8]{robert_monte_2004} and the acceptance probability in stationarity is lower-bounded by \(1/C\) \parencite[Lemma~7.9]{robert_monte_2004}.
    Since recoupling always occurs on acceptance, we deduce that the meeting times satisfy \(\Pr(\tau > t) \le (1 - \frac 1C)^t\), and hence all moments of \(\tau\) are finite.
\end{example}
\begin{example}[Gaussian Random Walk MH]
    One of the prototypical versions of MH is Gaussian Random Walk MH, which we here consider with the simple proposal \(q(\cdot \xmid x) \sim \mathsf{N}(0,\sigma^2 I)\) with fixed scale \(\sigma^2\).
    Consider a maximal reflection coupling of the proposal \parencite{wang_maximal_2021,oleary_couplings_2021}.
    This coupling is faithful and reparameterizes with \(\epsilon \sim \mathsf{Unif}[0,1]\) as
    \begin{equation*}
        \Tilde{q}(\epsilon,x,x^\circ,y) = \begin{cases}
            x^\circ, & \epsilon \le \dfrac{\varphi([x^\circ-y]/\sigma)}{\varphi([x^\circ-x]/\sigma)} \\\displaystyle
            x^\circ + \left[1 - 2\frac{\langle y-x, x^\circ-x \rangle}{\lVert y - x \rVert^2}\right](y-x), & \text{otherwise}
        \end{cases}.
    \end{equation*}
    where \(\varphi\) is the pdf for \(\mathsf{N}(0,I)\).
    This idea can be extended to a non-isotropic proposal by applying a whitening transform before coupling and then transforming back the results.
    Obtaining an analytical expression for the recoupling time is in general difficult as it will depend on how the proposals interact with the target distribution.
    A tail bound on meeting times of the form \(\Pr(\tau > t) \le C \delta^t\) for \(t, C \ge 0\), \(\delta \in (0,1)\), hence sufficient for all moments of \(\tau\) to be finite, was found by \textcite[Proposition~4]{jacob_unbiased_2020} based on the behavior of an appropriate Lyapunov function, most often obtained by assumptions on the tails of \(\pi_\theta\) \cite{roberts_geometric_1996}.
\end{example}

\begin{remark}
In a certain sense, geometric ergodicity implies \cref{ass:coupling}, as one can then manufacture a proposal kernel coupling where \(\tau_{x,y}\) has geometric tails, which by its Markovian nature can trivially be made faithful \citep[Section~3.2]{jacob_unbiased_2020}.
The important theoretical restrictions introduced by the assumptions here are therefore on the MH chain \(\{X_n\}_{n \in \mathbb N_0}\), \(f\), \(\pi_\theta\), and \(q\).
Nevertheless, for computations, it is important that the coupling is computationally tractable, and shorter recoupling times reduce the variance.
\end{remark}

\subsection{Augmented chain}\label{sec:main.Kaug}
Inspecting the DMH algorithm makes it clear that given only the MH chain \(\{X_n\}_{n \in \mathbb N_0}\) we do not have enough information to compute the coupling or the DMH estimator.
Hence, we will now \emph{augment} the transition kernel to preserve this information.
Our goal in this section is then to show that this augmented chain inherits its convergence properties from the MH chain.
A simpler augmented chain with only proposal information was analyzed similarly by \textcite{rudolf_metropolishastings_2020}, although we will use different proof strategies.

\begin{construction}[Augmented MH chain]\label{def:aug}
Consider the state space \(\mathsf X \times \mathsf X \times [0,1] \times [0,1]^{\mathbb N}\) and the product \(\sigma\)-algebra \(\mathscr B_{\mathsf X} \otimes \mathscr B_{\mathsf X} \otimes \mathscr B_{[0,1]} \otimes \mathscr B_{[0,1]}^{\otimes \mathbb N}\), where \(B_{[0,1]}^{\otimes \mathbb N}\) is the corresponding Borel \(\sigma\)-algebra on countable sequences \parencite[Lemma~1.2]{kallenberg_foundations_2021}.
Define a kernel that augments the current MH state \(x\) with the hidden information about the proposed next state, a uniform random value used to couple acceptance, and a sequence of \iid latent randomness distributed according to some \(\eta\) used to couple proposals:
\begin{equation}
    Q(dx' \times d{x^\circ}' \times du' \times d\epsilon' \mid x) = \delta_{x}(dx')q(d{x^\circ}' \xmid x)\,du'\,\eta^{\otimes \mathbb N}(d\epsilon') \label{eq:kernelN}
\end{equation}
where \(\eta^{\otimes \mathbb N}\) is the infinite product measure \parencite[Corollary~8.25]{kallenberg_foundations_2021}.
Note that that the uniform and latent sequence are sampled independently from the current state.
Next, define a kernel that given this augmented state deterministically executes the MH transition:
\begin{equation}
    R(dx' \xmid x,x^\circ,u,\epsilon) = \boldsymbol{1}_{\{u \le \alpha_\theta(x^\circ \xmid x)\}}\delta_{x^\circ}(dx') + \boldsymbol{1}_{\{u > \alpha_\theta(x^\circ \xmid x)\}}\delta_{x}(dx') \label{eq:kernelH} .
\end{equation}
Finally, define the augmented kernel
    \(K_{\mathrm{aug}} = RQ\)
which iterated yields the augmented chain \(\{(X_n, X^\circ_n, U_n, \{\epsilon_{n,k}\}_{k \in \mathbb N})\}_{n \in \mathbb N_0}\).
\end{construction}

By inspection
    \(K_\mathrm{MH} = QR\)
and hence one can interpret the new kernel \(K_\mathrm{aug}\) as instead stopping ``in the middle of'' the MH transitions.
From this characterization it follows that for \(N \in \mathbb N\) we have
\begin{equation*}%
    K_\mathrm{aug}^N = (RQ)^N = R(QR)^{N-1}Q = RK_\mathrm{MH}^{N-1}Q
\end{equation*}
and that the invariant distribution of \(K_\mathrm{aug}\) is \(\nu_\theta := \pi_\theta Q\) by a direct verification: 
\begin{equation*}
    \nu_\theta K_\mathrm{aug} = (\pi_\theta Q) (R Q) = \pi_\theta (Q R) Q = \pi_\theta Q = \nu_\theta.
\end{equation*}

We next establish the necessary technical results to inherit the desired properties and establish ergodicity; the proofs, as well as reminders of the definitions of \(\phi\)-irreducibility and Harris recurrence, are given in \cref{app:proofs.Kaug}.
\begin{proposition}[Inheriting \(\phi\)-irreducibility]\label{thm:irredaug}
    If the original MH chain is \(\phi\)-irreducible, then the augmented MH chain is \(\phi Q\)-irreducible.
\end{proposition}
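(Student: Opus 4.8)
The plan is to exploit the factorization \(K_{\mathrm{aug}}^N = R K_{\mathrm{MH}}^{N-1} Q\) derived above, which reduces the reachability question for the augmented chain to one for the MH chain followed by a single re-augmentation by \(Q\). Fix an augmented starting state \(\xi = (x, x^\circ, u, \epsilon)\). By \eqref{eq:kernelH} the kernel \(R(\cdot \mid \xi)\) is a Dirac measure at the deterministic MH outcome \(y(\xi) \in \{x, x^\circ\}\) (exactly one indicator is active for fixed \(\xi\)), so unwinding the composition convention gives, for every \(N \ge 1\) and every measurable set \(A\) in the augmented space,
\[
    K_{\mathrm{aug}}^N(A \mid \xi) = \int_{\mathsf X} Q(A \mid z)\, K_{\mathrm{MH}}^{N-1}(dz \mid y(\xi)).
\]

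First I would transfer a positive-measure target set for \(\phi Q\) into one for \(\phi\). Suppose \(\phi Q(A) > 0\), i.e.\ \(\int_{\mathsf X} Q(A \mid x)\, \phi(dx) > 0\), and define the set \(B = \{x \in \mathsf X : Q(A \mid x) > 0\}\), which is measurable because \(x \mapsto Q(A \mid x)\) is a measurable section of the kernel \(Q\). If \(\phi(B) = 0\), then \(Q(A \mid \cdot) = 0\) holds \(\phi\)-\ae and the integral vanishes, contradicting \(\phi Q(A) > 0\); hence \(\phi(B) > 0\).

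Then I would invoke \(\phi\)-irreducibility of the MH chain started at \(y(\xi)\): since \(\phi(B) > 0\), there exists some \(m \ge 1\) with \(K_{\mathrm{MH}}^m(B \mid y(\xi)) > 0\). Taking \(N = m+1\) in the displayed identity, the integrand \(Q(A \mid z)\) is strictly positive for every \(z \in B\), and \(B\) carries positive \(K_{\mathrm{MH}}^m(\cdot \mid y(\xi))\)-mass, so
\[
    K_{\mathrm{aug}}^{m+1}(A \mid \xi) \ge \int_{B} Q(A \mid z)\, K_{\mathrm{MH}}^m(dz \mid y(\xi)) > 0.
\]
As \(\xi\) and the set \(A\) with \(\phi Q(A) > 0\) were arbitrary, this establishes \(\phi Q\)-irreducibility of the augmented chain.

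The argument is essentially bookkeeping once the factorization is in hand; the step requiring the most care is the measure-theoretic transfer in the middle, namely that positivity of \(\phi Q(A)\) forces \(\phi\) to charge the set \(B\) of states from which \(Q\) can reach \(A\). I expect the only genuine (though modest) subtlety to be handling the starting point \(y(\xi)\): because \(R\) is deterministic, it collapses the initial augmentation to a single well-defined point of \(\mathsf X\) rather than smearing it out, and one must note that although \(y(\xi)\) depends on \(\xi\) through the accept/reject threshold, the conclusion holds uniformly over all such \(\xi\).
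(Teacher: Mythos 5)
Your proof is correct. It rests on the same skeleton as the paper's: the interleaving identity \(K_{\mathrm{aug}}^{N} = R K_{\mathrm{MH}}^{N-1} Q\) together with the observation that \(R(\cdot \xmid \xi)\) is a Dirac measure at a deterministic point \(y(\xi)\), which reduces reachability for the augmented chain to reachability for the MH chain followed by one application of \(Q\). Where you diverge is in the formulation of irreducibility you verify: the paper deliberately adopts the resolvent characterization (\(\phi \ll \kappa(\cdot \xmid x)\) with \(\kappa = \frac12\sum_n 2^{-n}K_{\mathrm{MH}}^n\)) precisely so that its proof becomes a chain of absolute-continuity relations --- showing \(\phi Q\) has a density with respect to \((\kappa Q)(\cdot \xmid x')\) and then dominating by the augmented resolvent --- whereas you verify the more common pointwise definition directly, via the transfer lemma that \(\phi Q(A) > 0\) forces \(\phi(B) > 0\) for \(B = \{x : Q(A \xmid x) > 0\}\), followed by MH irreducibility to reach \(B\) and one further step to reach \(A\). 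Your route is arguably more elementary (no Radon--Nikodym derivative appears), but it implicitly invokes the equivalence of the two definitions, both to use the hypothesis and to state the conclusion; this equivalence is standard and is the content of the Meyn--Tweedie proposition the paper cites, so nothing is missing --- it simply shifts the measure-theoretic work into a cited result rather than carrying it out via densities as the paper does.
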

\begin{proposition}[Inheriting Harris recurrence]\label{thm:harrisaug}
    If the original MH chain is Harris recurrent, then the augmented MH chain is Harris recurrent.
\end{proposition}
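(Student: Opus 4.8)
The plan is to exploit the operator identities $K_{\mathrm{MH}} = QR$ and $K_{\mathrm{aug}} = RQ$ already recorded above, together with the standard characterization (Meyn--Tweedie) that a $\phi$-irreducible recurrent Markov chain is Harris recurrent precisely when its only bounded harmonic functions are the everywhere-constant ones. First I would note that \cref{thm:irredaug} applied with $\phi = \pi_\theta$ shows $K_{\mathrm{aug}}$ is $\nu_\theta$-irreducible, and that $\nu_\theta = \pi_\theta Q$ is an invariant probability measure, as verified in the excerpt. A $\psi$-irreducible chain carrying an invariant probability measure is positive recurrent, so $K_{\mathrm{aug}}$ is recurrent and the harmonic-function criterion is available.

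The core of the argument transfers harmonicity between the two chains. Let $h$ be a bounded measurable function with $K_{\mathrm{aug}} h = h$, that is $R(Qh) = h$. Then $Qh$ is bounded and, using $K_{\mathrm{MH}} = QR$ and associativity of composition,
\[
  K_{\mathrm{MH}}(Qh) = (QR)(Qh) = Q\bigl(R(Qh)\bigr) = Qh,
\]
so $Qh$ is a bounded harmonic function for the MH chain. Since the MH chain is Harris recurrent by \cref{ass:MH}, its bounded harmonic functions are constant, whence $Qh \equiv c$ for some $c \in \mathbb R$. Applying $R$ and using that a Markov kernel fixes constants ($Rc = c$), we recover $h = R(Qh) = Rc = c$. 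Thus every bounded harmonic function of $K_{\mathrm{aug}}$ is constant, and by the criterion $K_{\mathrm{aug}}$ is Harris recurrent.

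I expect the main obstacle to be the careful invocation of the harmonic-function characterization on the general Polish state space, in particular the converse direction (constant bounded harmonic functions $\Rightarrow$ Harris) and the distinction between everywhere- and almost-everywhere constancy; this is where the Harris decomposition $\mathsf X = H \cup N$ into a maximal absorbing Harris set and a $\psi$-null exceptional set enters, since a non-empty non-Harris $N$ would furnish the non-constant harmonic function $\xi \mapsto \Pr_\xi(\text{the chain never enters } H)$. As a more hands-on alternative I would instead fix $A$ with $\nu_\theta(A) > 0$, set $\rho(x) = Q(A \mid x)$ and $G_\delta = \{\rho \ge \delta\}$ (so $\pi_\theta(G_\delta) > 0$ for small $\delta$, since $\int \rho\,d\pi_\theta = \nu_\theta(A) > 0$), and apply Lévy's conditional Borel--Cantelli lemma to the events $\{\xi_n \in A\}$: because the $Q$-randomness is drawn afresh at each step, $\Pr(\xi_n \in A \mid \mathcal F_{n-1}) = \rho(X_n)$, so $\{\xi_n \in A \text{ i.o.}\} = \{\sum_n \rho(X_n) = \infty\}$ almost surely, and Harris recurrence of the MH marginal on $G_\delta$ forces the sum to diverge. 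The subtlety there is that one must additionally argue that every accessible set has positive $\nu_\theta$-measure, so that these are the only sets to check; this uses that the proposal dynamics cannot reach $\nu_\theta$-null sets from generic states.
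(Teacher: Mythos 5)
Your proof is correct, but it takes a genuinely different route from the paper's. The paper argues via regeneration: it invokes Glynn's characterization (\cref{prop:harris}) that Harris recurrence is equivalent to the existence of a random time at which the chain's law forgets its starting point, and manufactures such a time by applying the Athreya--Ney--Nummelin splitting to the MH marginal and then drawing the augmentation variables $(X_n^\circ, U_n, \epsilon_n)$ from their conditional law given $(X_n, X_{n+1})$. You instead exploit the factorization $K_{\mathrm{MH}} = QR$, $K_{\mathrm{aug}} = RQ$ at the level of bounded harmonic functions: from $R(Qh) = h$ you get $K_{\mathrm{MH}}(Qh) = Q(R(Qh)) = Qh$, Harris recurrence of the MH chain makes $Qh$ constant, and applying $R$ returns constancy of $h$; together with $\nu_\theta$-irreducibility (\cref{thm:irredaug}), invariance of $\nu_\theta$ (hence positive recurrence), and the standard equivalence for positive recurrent chains between positive Harris recurrence and constancy of all bounded harmonic functions \parencite[Theorem~17.1.7]{meyn_markov_2009}, this closes the argument. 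The subtlety you flag --- that the converse direction of the harmonic criterion rests on the Harris decomposition $\mathsf X = H \cup N$, with $\xi \mapsto \Pr_\xi(\text{never enter } H)$ as the would-be non-constant harmonic function --- is precisely the content of that cited theorem, so your outline is sound, and your claims that irreducibility plus an invariant probability measure give positive recurrence, and that Harris chains admit only constant bounded harmonic functions, are likewise standard. Comparing the two: your argument is shorter and purely operator-algebraic, with the $QR$ versus $RQ$ intertwining doing all the work and no splitting construction needed; its price is that it leans on positivity (you need the invariant probability measure $\nu_\theta$ to establish recurrence before the harmonic criterion applies), whereas the paper's regeneration argument uses only Harris recurrence of the MH chain and would survive without an invariant probability measure --- immaterial in this paper, since $\pi_\theta$ is always invariant for MH, but a genuine difference in scope. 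The paper's proof is also constructive, exhibiting the regenerative structure that connects thematically with the coupling machinery used elsewhere in the paper.
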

\begin{corollary}\label{thm:ergodicaug}
    Under \cref{ass:MH}, the invariant probability distribution \(\nu_\theta\) for \(K_\mathrm{aug}\) is unique and ergodic.
\end{corollary}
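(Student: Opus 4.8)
The plan is to obtain the corollary as a direct consequence of \cref{thm:irredaug,thm:harrisaug}, the invariance of $\nu_\theta$ already verified above, and classical general state space Markov chain theory \parencite{robert_monte_2004}. First I would note that \cref{ass:MH} supplies exactly the two hypotheses these propositions require of the original MH chain: it is $\pi_\theta$-irreducible, and (as recorded in the assumption) it is consequently Harris recurrent. Taking $\phi = \pi_\theta$ in \cref{thm:irredaug} then shows that $K_{\mathrm{aug}}$ is $\pi_\theta Q$-irreducible, that is $\nu_\theta$-irreducible, while \cref{thm:harrisaug} shows that $K_{\mathrm{aug}}$ is Harris recurrent.

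Next I would observe that $\nu_\theta = \pi_\theta Q$ is a genuine probability measure, since $\pi_\theta$ is one and $Q$ is a Markov kernel, and that we have already checked $\nu_\theta K_{\mathrm{aug}} = \nu_\theta$. A Harris recurrent chain that admits an invariant probability measure is \emph{positive} Harris recurrent, and a positive Harris recurrent chain has a unique invariant probability measure. This delivers the uniqueness claim: $\nu_\theta$ is the unique invariant probability distribution of $K_{\mathrm{aug}}$.

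Finally, ergodicity follows from uniqueness. The invariant probability measures of $K_{\mathrm{aug}}$ form a convex set whose extreme points are precisely the ergodic ones, so a unique invariant probability measure is necessarily an extreme point and hence ergodic; equivalently, one may invoke the ergodic theorem for positive Harris chains, which yields \as convergence of the Cesàro averages of every $\nu_\theta$-integrable functional, from every starting state. This is the form of the conclusion we will use when transferring the law of large numbers to the DMH estimator.

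I expect essentially all of the substantive work to reside in \cref{thm:irredaug,thm:harrisaug}, so that this corollary is a bookkeeping step. The one point requiring genuine care in the write-up is the passage from plain Harris recurrence to positive Harris recurrence via the existence of the invariant probability measure $\nu_\theta$, and correspondingly invoking the uniqueness and ergodicity theorems in their \emph{general state space} form on the non-countable augmented space $\mathsf X \times \mathsf X \times [0,1] \times [0,1]^{\mathbb N}$, rather than relying on countable-space intuition.
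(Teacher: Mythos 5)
Your proposal is correct and takes essentially the same route as the paper: inherit \(\nu_\theta\)-irreducibility of \(K_\mathrm{aug}\) from \cref{thm:irredaug}, combine with the already-verified invariance \(\nu_\theta K_\mathrm{aug} = \nu_\theta\), and conclude by classical general-state-space Markov chain theory. The only difference is that the paper's proof does not invoke \cref{thm:harrisaug} at all --- for a \(\phi\)-irreducible kernel, the existence of an invariant probability measure already forces its uniqueness and the ergodicity of the stationary shift \parencite[Theorem~5.2.6, Corollary~9.2.16]{douc_markov_2018} --- so your detour through Harris and positive Harris recurrence, while valid, is dispensable here (Harris recurrence is needed later, to upgrade ``\(\nu_\theta\)-almost every'' starting state to ``every'' starting state in the consistency theorem).
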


Although it is not necessary for the consistency argument that follows, and we do not assume that the original MH chain is geometrically ergodic in all cases, we also establish the inheritance of geometric ergodicity under an additional technical condition for later use in a central limit theorem.
The proof, which relies on a \emph{data processing inequality} in total variation, is also given in \cref{app:proofs.Kaug}.
\begin{proposition}[Inheriting geometric ergodicity]\label{thm:geomerg}
    Under \cref{ass:MH,ass:coupling}, additionally assuming \(q(x^\circ \xmid x) > 0\) for all \(x,x^\circ \in \mathsf X\), if the original MH chain is geometrically ergodic, then the augmented MH chain is geometrically ergodic.
\end{proposition}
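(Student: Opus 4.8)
The plan is to leverage the factorization \(K_{\mathrm{aug}}^N = R K_{\mathrm{MH}}^{N-1} Q\) derived above, which lets me transfer convergence from the MH chain to the augmented chain through a single data processing inequality. Writing \(s = (x, x^\circ, u, \epsilon)\) for a generic augmented state, the first thing to observe is that \(R\) in \eqref{eq:kernelH} is deterministic, so \(R(\,\cdot \xmid s) = \delta_{r(s)}\) for the MH state \(r(s) \in \mathsf X\) obtained by carrying out the recorded accept/reject decision. Starting from \(s\), the factorization then yields for every \(N \ge 1\) the identity
\begin{equation*}
    K_{\mathrm{aug}}^N(\,\cdot \xmid s) = \bigl(K_{\mathrm{MH}}^{N-1}(\,\cdot \xmid r(s))\bigr) Q,
\end{equation*}
whereas the invariant law carries the same trailing augmentation, \(\nu_\theta = \pi_\theta Q\).

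The core step is the data processing inequality for total variation: for any probability measures \(\mu, \mu'\) and Markov kernel \(Q\) one has \(\lVert \mu Q - \mu' Q \rVert_{\mathrm{TV}} \le \lVert \mu - \mu' \rVert_{\mathrm{TV}}\), since \(Qg\) inherits the sup-norm bound of any test function \(g\). Applying this with \(\mu = K_{\mathrm{MH}}^{N-1}(\,\cdot \xmid r(s))\) and \(\mu' = \pi_\theta\) strips the augmentation away:
\begin{equation*}
    \bigl\lVert K_{\mathrm{aug}}^N(\,\cdot \xmid s) - \nu_\theta \bigr\rVert_{\mathrm{TV}} \le \bigl\lVert K_{\mathrm{MH}}^{N-1}(\,\cdot \xmid r(s)) - \pi_\theta \bigr\rVert_{\mathrm{TV}}.
\end{equation*}
Geometric ergodicity of the MH chain bounds the right-hand side by \(M(r(s))\,\rho^{N-1}\) for some \(\rho \in (0,1)\), which is exactly a geometric rate for the augmented chain with rate function \(s \mapsto M(r(s))/\rho\).

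It then remains to certify that this rate function is admissible and that the augmented chain satisfies the standing regularity conditions for geometric ergodicity. Taking \(M\) of the standard form \(M = cV\) for the drift function \(V\), I would set \(\widetilde V := RV\), so that \(\widetilde V(s) = V(r(s)) \ge 1\); a one-line computation using \(Q R V = K_{\mathrm{MH}} V\) shows \(\widetilde V\) inherits the geometric drift condition \eqref{eq:drift} for \(K_{\mathrm{aug}}\) with the same constants, while \(\nu_\theta \widetilde V = \pi_\theta V < \infty\), and the pushforward identity \(\nu_\theta R = \pi_\theta\) confirms the rate is finite \(\nu_\theta\)-almost everywhere. Finally I would invoke \(\psi\)-irreducibility from \cref{thm:irredaug} and use the positivity \(q(x^\circ \xmid x) > 0\) together with \cref{ass:coupling} to establish aperiodicity and smallness of the sublevel sets of \(\widetilde V\), placing the inherited bound within the standard equivalence between the geometric drift condition and geometric ergodicity. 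The main obstacle is conceptual rather than computational: one must recognize that the deterministic \(R\)-step merely evaluates the MH rate at \(r(s)\) while the stochastic augmentation \(Q\) is free in total variation, and then confirm finiteness of the transported rate on the relevant set via \(\nu_\theta R = \pi_\theta\).
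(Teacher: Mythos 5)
Your proposal is correct and follows the same skeleton as the paper's proof — the factorization \(K_{\mathrm{aug}}^N = R K_{\mathrm{MH}}^{N-1} Q\), a total-variation data processing inequality for \(Q\), and transfer of the geometric rate — but it implements the two key steps differently, and the differences are substantive. First, you justify the data processing inequality by the universal contraction property of Markov kernels (\(\lVert Qg \rVert_\infty \le \lVert g \rVert_\infty\)), which is valid for \emph{any} kernel; the paper instead proves it by extending a coupling of \(\phi,\psi\) to one of \(\phi Q, \psi Q\) using the faithful proposal coupling of \cref{ass:coupling}, so your route shows faithfulness is not actually needed for this inequality. Second, you conclude by pushing the \(\pi_\theta\)-a.e.\ finite rate function through the identity \(\nu_\theta R = \pi_\theta\) (so that \(s \mapsto M(r(s))\) is \(\nu_\theta\)-a.e.\ finite), whereas the paper invokes the positive-measure-set characterization of \cref{prop:defgeomerg} and uses the positivity of \(q\) precisely to guarantee that the set \(B = A \times A \times [0,1] \times [0,1]^{\mathbb N}\) has \(\pi_\theta Q(B) > 0\); your transfer argument sidesteps that, so positivity enters your proof only through the deferred aperiodicity step. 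One simplification you missed: the drift-function detour (\(\widetilde V = RV\), smallness of sublevel sets) in your final paragraph is unnecessary, since the transferred bound \(\lVert K_{\mathrm{aug}}^N(\cdot \xmid s) - \nu_\theta \rVert_{\mathrm{TV}} \le M(r(s))\rho^{N-1}\), finite \(\nu_\theta\)-a.e., together with \(\phi\)-irreducibility from \cref{thm:irredaug}, already meets the definition of geometric ergodicity; moreover, aperiodicity — which you defer to positivity of \(q\) and \cref{ass:coupling} — in fact follows for free from the a.e.\ total-variation convergence you have established (a chain with period \(d \ge 2\) keeps \(\lVert K^{nd}(\cdot \xmid s) - \nu_\theta \rVert_{\mathrm{TV}} \ge 1 - 1/d\) on a cyclic class), though the paper is equally terse on this point.
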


\subsection{Tail functional}\label{sec:main.tail}
With the augmented chain and the reparametrization version of the coupling, we can express the DMH estimator in \cref{def:dmh} as a \emph{tail functional} of the augmented chain.
Let \(\bm{X}_n^\rightarrow = \{(X_{n+k}, X^\circ_{n+k}, U_{n+k}, \{\epsilon_{n+k,\ell}\}_{\ell \in \mathbb N})\}_{k \in \mathbb N_0}\) be the \emph{tail} from the \(n\)th augmented state.
The contribution to the DMH derivative estimate from a perturbation at the \(n\)th transition with a horizon \(m\) is
\begin{equation*}
    h_m(\bm{X}_n^\rightarrow) = W_n \sum_{k=1}^{m-1} \left[f(Y_{n,k}) - f(X_{n+k})\right]
\end{equation*}
which by construction is a deterministic function of the tail.
We define \(h = \lim_{m \to \infty} h_m\) as the \as limit.
In fact, under \cref{ass:coupling}, faithfulness implies only the terms before recoupling will be non-zero, so we may let\footnote{Note that the recoupling time was deliberately defined starting from time \(1\), so that we can form excursions from a common state at time \(0\).} \(\tau_n = \tau_{X_{n+1},Y_{n,1}}\) and write
\begin{equation*}
    h_m(\bm{X}_n^\rightarrow) = W_n \sum_{k=1}^{(\tau_n \wedge m) - 1} \left[f(Y_{n,k}) - f(X_{n+k})\right]
\end{equation*}
and similarly for \(h\).
To apply our convergence theorems, we must show that the random variable \(h\) is integrable with respect to the induced measure on sample paths \(\Pr_{\nu_\theta}\) under our assumptions.
We give the moment bound proofs in \cref{app:proofs.tail}.
\begin{lemma}[Weight moments]\label{thm:weightmoments}
    Under \cref{ass:target}, \(W_n \in L^2(\mathbb P_{\nu_\theta})\) uniformly in \(\theta\).
\end{lemma}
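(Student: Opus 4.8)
The plan is to exploit that the sign factor $1 - 2\cdot\bm{1}_{U_n\le\alpha_\theta(X^\circ_n \xmid X_n)}$ takes values in $\{-1,+1\}$, so that $W_n^2 = \big(\frac{\partial}{\partial\theta}\alpha_\theta(X^\circ_n \xmid X_n)\big)^2$ and the coupling variable $U_n$ plays no role in the second moment. It then suffices to bound $\E_{\nu_\theta}\big[(\partial_\theta\alpha_\theta(X^\circ_n \xmid X_n))^2\big]$, where under the invariant law $\nu_\theta=\pi_\theta Q$ the pair $(X_n,X^\circ_n)$ is distributed as $\pi_\theta(dx)\,q(dx^\circ \xmid x)$.

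First I would compute the derivative of the acceptance probability. Writing $r_\theta(x^\circ,x)=\frac{g_\theta(x^\circ)q(x\xmid x^\circ)}{g_\theta(x)q(x^\circ\xmid x)}$ so that $\alpha_\theta=r_\theta\wedge1$, the derivative vanishes on $\{r_\theta>1\}$ and equals $\partial_\theta r_\theta = r_\theta\,\partial_\theta\log r_\theta$ on $\{r_\theta<1\}$; the kink at $r_\theta=1$ is negligible and can be absorbed into the one-sided bound $|\partial_\theta\alpha_\theta|\le|\partial_\theta r_\theta|\,\bm{1}_{r_\theta\le1}$. Because the proposal density is independent of $\theta$ and the normalizing constant cancels in $\log r_\theta$ (leaving only a $\theta$-free $q$-contribution), we obtain $\partial_\theta\log r_\theta = \partial_\theta\log\pi_\theta(x^\circ)-\partial_\theta\log\pi_\theta(x)$, a difference of scores. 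Hence on the active region $(\partial_\theta\alpha_\theta)^2 = r_\theta^2\big(\partial_\theta\log\pi_\theta(x^\circ)-\partial_\theta\log\pi_\theta(x)\big)^2 \le 2r_\theta^2\big[(\partial_\theta\log\pi_\theta(x^\circ))^2 + (\partial_\theta\log\pi_\theta(x))^2\big]$.

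Integrating under $\pi_\theta(dx)\,q(dx^\circ\xmid x)$, the current-state term is immediate: $r_\theta^2\le1$ together with $q(\cdot\xmid x)$ integrating to one yields a contribution bounded by $2\,\E_{\pi_\theta}[(\partial_\theta\log\pi_\theta)^2]$. The proposal-state term is the main obstacle, since $X^\circ_n$ follows the proposal pushforward $\pi_\theta q$ rather than $\pi_\theta$, which \cref{ass:target} does not directly control. I would resolve this with the detailed-balance-type identity $r_\theta(x^\circ,x)\,\pi_\theta(x)\,q(x^\circ\xmid x) = \pi_\theta(x^\circ)\,q(x\xmid x^\circ)$ as densities against $\mu_0(dx)\,\mu_0(dx^\circ)$: the acceptance ratio supplies exactly the change of measure that swaps the roles of current and proposed state. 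Using $r_\theta^2\le r_\theta$ on $\{r_\theta\le1\}$, the integral $\int r_\theta\,(\partial_\theta\log\pi_\theta(x^\circ))^2\,\pi_\theta(x)\,q(x^\circ\xmid x)\,\mu_0(dx)\,\mu_0(dx^\circ)$ becomes $\int (\partial_\theta\log\pi_\theta(x^\circ))^2\,\pi_\theta(x^\circ)\,q(x\xmid x^\circ)\,\mu_0(dx)\,\mu_0(dx^\circ) = \E_{\pi_\theta}[(\partial_\theta\log\pi_\theta)^2]$ after integrating out $x$.

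Combining the two contributions gives $\E_{\nu_\theta}[W_n^2]\le 4\,\E_{\pi_\theta}[(\partial_\theta\log\pi_\theta)^2]$. Bounding the integrand by $\sup_{\theta\in\Theta}|\partial_\theta\log\pi_\theta|^2$ \emph{before} integrating dominates it by the quantity assumed integrable in \cref{ass:target}, which delivers both finiteness and the uniform-in-$\theta$ control. I expect the change-of-measure step for the proposal term to be the crux of the argument, as it is what lets the acceptance ratio tame the otherwise uncontrolled proposal pushforward; the remaining manipulations are routine bounds on $r_\theta$ and an application of Cauchy--Schwarz.
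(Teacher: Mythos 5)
Your proof is correct and takes essentially the same route as the paper's: the same score expansion of \(\partial_\theta\alpha_\theta\), the same split into current-state and proposal-state score terms via \((a+b)^2\le 2a^2+2b^2\), and your detailed-balance change of measure combined with \(r_\theta^2\le r_\theta\) on the active region is precisely the paper's ``sub-invariance'' bound \(\iint f(x^\circ)\,\alpha_\theta(x^\circ\mid x)\,q(dx^\circ\mid x)\,\pi_\theta(dx)\le \pi_\theta f\). The only (cosmetic) difference is that you cancel the normalizing constant inside \(\log r_\theta\) and work with normalized scores throughout, which streamlines away the paper's extra triangle-inequality step involving \(\partial_\theta\log\mathcal Z_\theta\).
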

\begin{proposition}[Integrability of terms]\label{thm:h}
    Under \crefnosort{ass:target,ass:performance,ass:coupling},
    \(h \in L^1(\mathbb P_{\nu_\theta})\), \(\sup_{m \in \mathbb N} \lvert h_m \rvert \in L^1(\Pr_{\nu_\theta})\), and \(h_m \to h\) in \(L^1(\Pr_{\nu_\theta})\).
\end{proposition}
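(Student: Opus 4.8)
The plan is to reduce all three assertions to the $L^1$-integrability of a single dominating variable. By stationarity of the augmented chain it suffices to consider the tail $\bm{X}_0^\rightarrow$ from time $0$, and I would set
\[
    G := \lvert W_0\rvert \sum_{k=1}^{\tau_0 - 1} \bigl\lvert f(Y_{0,k}) - f(X_{k})\bigr\rvert .
\]
Each partial sum is dominated term by term, so $\sup_m \lvert h_m\rvert \le G$ and $\lvert h\rvert \le G$. Faithfulness (\cref{ass:coupling}, part (i)) forces $f(Y_{0,k}) = f(X_k)$ once $k \ge \tau_0$, and $\tau_0 < \infty$ \as by part (ii); hence $h_m = h$ for every $m \ge \tau_0$, so $h_m \to h$ \as. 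Granting $G \in L^1(\Pr_{\nu_\theta})$, dominated convergence then delivers all three claims at once: $h \in L^1(\Pr_{\nu_\theta})$, $\sup_m\lvert h_m\rvert \in L^1(\Pr_{\nu_\theta})$, and $h_m \to h$ in $L^1(\Pr_{\nu_\theta})$. The whole proposition thus reduces to $\E_{\nu_\theta}[G] < \infty$.

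First I would separate the weight by Cauchy--Schwarz,
\[
    \E_{\nu_\theta}[G] \le \lVert W_0\rVert_{L^2(\Pr_{\nu_\theta})}\,\bigl\lVert S \bigr\rVert_{L^2(\Pr_{\nu_\theta})}, \qquad S := \sum_{k=1}^{\tau_0-1}\bigl\lvert f(Y_{0,k}) - f(X_k)\bigr\rvert ,
\]
where the first factor is finite uniformly in $\theta$ by \cref{thm:weightmoments}. It then remains to bound $\lVert S\rVert_{L^2}$, which is where \cref{ass:performance} splits into two cases.

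In case \cref{ass:performance.fbdd}, boundedness $\lvert f\rvert \le M$ gives $S \le 2M\,\tau_0$, so $\lVert S\rVert_{L^2} \le 2M\,\lVert\tau_0\rVert_{L^2}$. Since $\tau_0 = \tau_{X_1,Y_{0,1}}$ and the split states $\{X_1,Y_{0,1}\}$ coincide with $\{X_0,X^\circ_0\}$, I would compute $\E_{\nu_\theta}[\tau_0^2] = \int \E[\tau_{x,x^\circ}^2]\,\pi_\theta(dx)\,q(dx^\circ\xmid x)$ and conclude finiteness by integrating the pointwise bound of \cref{ass:coupling} (part (ii)) against the law of the split states. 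In case \cref{ass:performance.geom} I would use $\lvert f\rvert \le c\,V^{1/(2+\gamma)}$ to bound the summand $A_k := \lvert f(Y_{0,k}) - f(X_k)\rvert$ through $A_k^{2+\gamma} \lesssim V(Y_{0,k}) + V(X_k)$, and obtain uniform-in-$k$ moments: $\E_{\nu_\theta}[V(X_k)] = \pi_\theta V < \infty$ for the primal chain, and $\sup_k \E[V(Y_{0,k})] < \infty$ for the alternative chain by iterating the drift condition \eqref{eq:drift} from the initial value $Y_{0,1} \in \{X_0,X^\circ_0\}$, whose $V$-moment is controlled by $\pi_\theta V + \pi_\theta q V < \infty$. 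Minkowski's inequality followed by Hölder with exponents $\bigl((2+\gamma)/2,(2+\gamma)/\gamma\bigr)$ then yields
\[
    \lVert S\rVert_{L^2} \le \sum_{k=1}^{\infty} \E[A_k^{2+\gamma}]^{1/(2+\gamma)}\,\Pr(\tau_0 > k)^{\gamma/(2(2+\gamma))},
\]
with the moment factor bounded uniformly in $k$ by the estimates above.

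The last display is where I expect the real difficulty. Because the tail probability is raised only to the power $\gamma/(2(2+\gamma)) < 1/2$, square-integrability of $\tau_0$ alone is \emph{not} enough to sum the series --- the unbounded summands can conspire with a large recoupling horizon. This is exactly the point at which the geometric ergodicity of \cref{ass:performance.geom} must be used: in that regime the recoupling time carries geometric tails $\Pr(\tau_0 > k) \le C\delta^k$ (as exhibited by the couplings in the two examples and the remark following \cref{ass:coupling}), which decay fast enough to make the series converge. I anticipate the most delicate bookkeeping to be the off-stationarity control of the alternative chain's $V$-moments and the verification that the recoupling moments are integrable against the split-state law; by contrast, the bounded case \cref{ass:performance.fbdd} is essentially routine.
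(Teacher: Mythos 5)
Your reduction to a single dominating variable $G$, the \as identification $h_m = h$ for $m \ge \tau_0$ via faithfulness, and your treatment of the bounded case \cref{ass:performance.fbdd} all match the paper's proof. The genuine gap is in case \cref{ass:performance.geom}, and it is exactly the one you flag yourself: after Minkowski and H\"older your series requires $\sum_k \Pr(\tau_0 > k)^{\gamma/(2(2+\gamma))} < \infty$, and you close it by asserting that geometric ergodicity supplies geometric tails $\Pr(\tau_0 > k) \le C\delta^k$. That assertion is not available under the stated hypotheses. The recoupling time is a property of the \emph{user-supplied} proposal coupling, and \cref{ass:coupling} demands only $\E[\tau_{x,y}^2] < \infty$; geometric ergodicity of the MH chain does not force the particular coupling in use to meet geometrically fast (the remark after \cref{ass:coupling} says only that \emph{some} coupling with geometric tails could be manufactured, not that the given one has them). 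Under the actual assumptions, Markov's inequality yields only $\Pr(\tau_0 > k) \lesssim k^{-2}$, so your summand decays like $k^{-\gamma/(2+\gamma)}$, which is never summable; rescuing the termwise bound would require $\E[\tau_0^{p}] < \infty$ for $p > 2(2+\gamma)/\gamma$, a hypothesis that blows up as $\gamma \to 0$.

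The paper's proof avoids this dead end by never splitting the stopped sum term by term. Writing $S^{(X)}_{\tau_0}(f) = \sum_{k=1}^{\tau_0}|f(X_k)|$ and $S^{(Y)}_{\tau_0}(f) = \sum_{k=1}^{\tau_0}|f(Y_{0,k})|$, it bounds $\E[(S^{(X)}_{\tau_0}(f))^2]$ and $\E[(S^{(Y)}_{\tau_0}(f))^2]$ directly using results on moments of stopped sums under drift conditions: \textcite[Theorem~3.1]{alsmeyer_existence_2003} reduces the second moment of the stopped sum of $|f|$ (using $f^2 \lesssim V$ and the drift condition for $\sqrt V$) to $\E[S_{\tau_0}(V)]$, initial $V$-moments, and $\E[\tau_0^2]$; then \textcite[Lemma~2.2]{moustakides_extension_1999} bounds $\E[S_{\tau_0}(V)]$ by $\tfrac{1}{1-\alpha}\bigl(\text{initial }V\text{-moment} + \beta\,\E[\tau_0]\bigr)$ via the drift inequality \eqref{eq:drift}. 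These stopped-sum estimates are precisely what let square-integrability of $\tau_0$ suffice, with the drift condition absorbing the unboundedness of $f$; this is the missing ingredient in your attempt.
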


\begin{remark}
    The preceding results generalize to the existence of higher moments of \(\frac{\partial}{\partial \theta} \log \pi_\theta(X)\) implying the existence of higher moments of \(W_n\), although this seems less well-known for many classes of models compared to the Fisher information.
    Combining this with a stronger growth condition on \(f\) similarly yields higher moments of \(h\).
\end{remark}

\subsection{Limit theorems}\label{sec:main.limit}
Once we have established sufficient conditions for integrability, we are ready to establish our main results of unbiasedness in stationarity, strong consistency, and a central limit theorem.
All proofs in this section are given in \cref{app:proofs.limit}.

First, we show that the estimator is unbiased for a finite horizon when starting in stationarity.
Similar results have been proven under various conditions and for various types of gradient estimators in for example \textcite{arya_automatic_2022,fu_conditional_1997,heidergott_gradient_2010,seyer_differentiable_2023}.
This theorem is the critical step for the validity of the estimator and relies on a conditioning argument from SPA.
\begin{theorem}[Unbiasedness in stationarity]\label{thm:unbiasedness}
    Let \(\Pr_{\nu_\theta}\) denote the induced measure on sample paths following \(K_\mathrm{aug}\) started according to \(\nu_\theta\).
    If \cref{ass:MH,ass:target,ass:performance,ass:coupling} hold, then the DMH estimator is finite-horizon unbiased:
    \begin{equation*}%
        \left.\E_{\Pr_{\nu_\theta}}\left[\frac 1N \sum_{n=0}^{N-1}h_{N-n}(\bm{X}_n^\rightarrow)\right]\right|_{\theta = \theta_0} = \left.\frac{\partial}{\partial \theta} \E_{X \sim \pi_\theta}[f(X)]\right|_{\theta = \theta_0}
    \end{equation*}
    for all \(N \in \mathbb N\) and interior \(\theta_0 \in \Theta\).
\end{theorem}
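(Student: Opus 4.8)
The plan is to establish the single-site identity $\E_{\nu_\theta}[h(\bm{X}_0^\rightarrow)] = \partial_\theta(\pi_\theta f)$ and then let invariance do the rest. Since $\nu_\theta$ is $K_{\mathrm{aug}}$-invariant, the shifted tail $\bm{X}_n^\rightarrow$ has the same law as $\bm{X}_0^\rightarrow$, so every summand $\E_{\nu_\theta}[h_{N-n}(\bm{X}_n^\rightarrow)]$ is a function of its horizon alone; the $L^1$-domination and convergence $h_m\to h$ from \cref{thm:h}, backed by faithfulness (terms past the recoupling time $\tau_n$ vanish) and the square-integrable $\tau$ of \cref{ass:coupling}, let me replace each contribution by the recoupled functional $h$ and reduce the theorem to computing $\E_{\nu_\theta}[h]$.

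The crux is the SPA conditioning step, which converts the discrete accept/reject coin into a derivative. Fix a lag $k$ and abbreviate $g = K_{\mathrm{MH}}^{k-1}f$. Taking the conditional expectation of $f(Y_{0,k})-f(X_k)$ over the evolution after the split (handled in the next paragraph) replaces it by $g(Y_{0,1})-g(X_1)$, so it suffices to condition on $(X_0, X_0^\circ)$ and integrate out only the acceptance variable $U_0$. On $\{U_0\le\alpha_\theta\}$ the split of \eqref{eq:shadow} sends the primal to $X_0^\circ$ and the alternative to $X_0$ carrying weight $-\partial_\theta\alpha_\theta$, while on $\{U_0>\alpha_\theta\}$ the assignment and the sign both flip; the correction factor in \eqref{eq:weight} is engineered so that both events yield the identical term $\partial_\theta\alpha_\theta(X_0^\circ\mid X_0)\,[g(X_0^\circ)-g(X_0)]$. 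Averaging the coin therefore produces exactly this infinitesimal-perturbation contribution, absorbing the nondifferentiability of the step.

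It remains to carry $g$ across the lag and sum. Because the coupling preserves marginals and acceptance is uniform-thresholded, the alternative chain is marginally an MH chain, so $\E[f(Y_{0,k})\mid Y_{0,1}=y] = K_{\mathrm{MH}}^{k-1}f(y)$ no matter how it is coupled to the primal; combined with the previous paragraph and integrating the proposal against $q(\cdot\mid X_0)$ this gives $\E[W_0(f(Y_{0,k})-f(X_k))\mid X_0] = (\partial_\theta K_{\mathrm{MH}})K_{\mathrm{MH}}^{k-1}f(X_0)$, where $\partial_\theta K_{\mathrm{MH}}\,g(x) := \int\partial_\theta\alpha_\theta(x^\circ\mid x)[g(x^\circ)-g(x)]\,q(dx^\circ\mid x)$. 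Summing over $k$ and integrating $X_0\sim\pi_\theta$ yields $\E_{\nu_\theta}[h] = \pi_\theta(\partial_\theta K_{\mathrm{MH}})\sum_{j\ge0}K_{\mathrm{MH}}^j f$; differentiating the stationarity relation $\pi_\theta = \pi_\theta K_{\mathrm{MH}}$ gives $\pi_\theta(\partial_\theta K_{\mathrm{MH}}) = (\partial_\theta\pi_\theta)(I - K_{\mathrm{MH}})$, and the telescoping $(I-K_{\mathrm{MH}})\sum_{j=0}^{m-1}K_{\mathrm{MH}}^j = I - K_{\mathrm{MH}}^m$ collapses the partial sums to $(\partial_\theta\pi_\theta)f = \partial_\theta(\pi_\theta f)$.

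The main obstacle is justifying the two interchanges. First, differentiating $\alpha_\theta$ under the integral and exchanging the sum over $k$ with the expectation: I would dominate using the uniform $L^2$ weight bound of \cref{thm:weightmoments} together with the $L^1$ dominating function and convergence of \cref{thm:h}, which in turn rest on \crefnosort{ass:target,ass:performance,ass:coupling}. Second, the telescoping leaves the remainder $(\partial_\theta\pi_\theta)K_{\mathrm{MH}}^{m}f$, which I would send to zero as $m\to\infty$ via the ergodic convergence $K_{\mathrm{MH}}^m f\to\pi_\theta f$ from \cref{ass:MH}, dominated by the square-integrable score of \cref{ass:target}; the delicate point is that only score differences enter $\partial_\theta\alpha_\theta$, so the intractable normalizing-constant derivative cancels and the signed measure $\partial_\theta\pi_\theta$ acts on the centered limit $f-\pi_\theta f$, returning zero. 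With both interchanges secured, invariance makes every summand equal to $\partial_\theta(\pi_\theta f)$ and the average is unbiased.
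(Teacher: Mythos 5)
Your core computation is correct and takes a genuinely different route from the paper. Integrating out the acceptance variable \(U_0\) given \((X_0,X_0^\circ)\) --- both branches yielding \(\partial_\theta\alpha_\theta(X_0^\circ \xmid X_0)\,[g(X_0^\circ)-g(X_0)]\) with \(g=K_\mathrm{MH}^{k-1}f\) --- and using that the alternative chain is marginally an MH chain gives
\begin{equation*}
    \E\left[W_0\left(f(Y_{0,k})-f(X_k)\right) \xmiddle| X_0\right]=(\partial_\theta K_\mathrm{MH})\,K_\mathrm{MH}^{k-1}f(X_0),
\end{equation*}
with \((\partial_\theta K_\mathrm{MH})g(x)=\int \partial_\theta\alpha_\theta(x^\circ \xmid x)\left[g(x^\circ)-g(x)\right]q(dx^\circ \xmid x)\) the kernel derivative; differentiating stationarity and telescoping then yields the infinite-horizon identity \(\E_{\Pr_{\nu_\theta}}[h]=\partial_\theta(\pi_\theta f)\). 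The paper instead runs coupled chains at \(\theta_0\pm\epsilon\), partitions on the first-perturbation events \(B_k(\epsilon)\), and passes to the limit via a uniform-integrability argument before transposing indices; your route trades that limiting analysis for an exact conditional computation and makes the series \(\sum_{j\ge 0}\pi_\theta(\partial_\theta K_\mathrm{MH})K_\mathrm{MH}^{j}f\) explicit.

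The genuine gap is your opening reduction, which replaces each \(h_{N-n}\) by \(h\). \cref{thm:h} gives \(h_m\to h\) in \(L^1(\Pr_{\nu_\theta})\), hence \(\E[h_m]\to\E[h]\), but not \(\E[h_m]=\E[h]\) at finite \(m\): your own telescoping identity shows
\begin{equation*}
    \E_{\Pr_{\nu_\theta}}[h_m]=(\partial_\theta\pi_\theta)\left(I-K_\mathrm{MH}^{m-1}\right)f
    =\partial_\theta(\pi_\theta f)-(\partial_\theta\pi_\theta)K_\mathrm{MH}^{m-1}f,
\end{equation*}
so the expectation of the finite-horizon estimator is \(\partial_\theta(\pi_\theta f)-\frac 1N\sum_{j=0}^{N-1}(\partial_\theta\pi_\theta)K_\mathrm{MH}^{j}f\), and the remainder is nonzero in general; for \(N=1\) the estimator is \(h_1\equiv 0\) while the derivative need not vanish, so no patch of the replacement step can restore exact equality. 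What your argument establishes is asymptotic unbiasedness --- the bias is a Cesàro average of terms \((\partial_\theta\pi_\theta)K_\mathrm{MH}^{j}f\to 0\) --- which is precisely what the proof of \cref{thm:consistency} consumes, but not the exact finite-\(N\) identity stated. Be aware that this discrepancy is not specific to your write-up: the paper's own proof identifies \(\partial_\theta(\pi_\theta f)\) with \(\frac 1N\sum_{n}\partial_\theta\E[f(X_n)]\) computed with the initial law frozen at \(\nu_{\theta_0}\) and only the kernel perturbed, an identification that silently drops exactly the remainder above (the sensitivity of the invariant initial distribution). Your route does not close this gap, but it makes it visible; read as asymptotic unbiasedness, your proof is a valid and arguably cleaner alternative.
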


Next, using that the augmented MH chain inherits the ergodic properties of the original MH chain, we apply Maker's ergodic theorem \parencite{maker_ergodic_1940} (recalled in \cref{app:proofs.limit}) to show that the finite-horizon version of the estimator converges to the constant infinite-horizon expectation.
This allows us to lift unbiasedness to consistency, since what we converge to must then be the desired expectation.
Indeed, establishing consistency by lifting (asymptotic) unbiasedness through a regenerative structure has been a general strategy of SPA literature \parencite{hu_strong_1990,glasserman_strongly_1991}.
\begin{theorem}\label{thm:consistency}
    If \cref{ass:MH,ass:target,ass:performance,ass:coupling} hold, then the DMH estimator is strongly consistent:
    \begin{equation*}
        \lim_{N \to \infty} \frac 1N \sum_{n=0}^{N-1} h_{N-n}(\bm{X}_n^\rightarrow) = \left.\frac{\partial}{\partial \theta} \E_{X \sim \pi_\theta}[f(X)]\right|_{\theta = \theta_0}\quad \Pr_{\bm{X}_0}\text{-\as}
    \end{equation*}
    for interior \(\theta_0 \in \Theta\) and every starting state \(\bm{X}_0 \in \mathsf X \times \mathsf X \times [0,1] \times [0,1]^{\mathbb N}\).
    In particular, for every starting state \(X_0 \in \mathsf X\) one can sample \(\bm{X}_0 \sim Q(\cdot \xmid X_0)\).
\end{theorem}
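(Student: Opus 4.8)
The plan is to recognize the finite-horizon estimator as a Maker-type ergodic average over the tail dynamical system induced by \(K_\mathrm{aug}\), to identify its limit through the stationary unbiasedness of \cref{thm:unbiasedness}, and finally to upgrade from a stationary start to an arbitrary start using Harris recurrence. Writing \(\bm X^\rightarrow = \bm X_0^\rightarrow\) and letting \(T\) denote the shift on tails, we have \(h_{N-n}(\bm X_n^\rightarrow) = h_{N-n}(T^n\bm X^\rightarrow)\), so the estimator is exactly \(\frac1N\sum_{n=0}^{N-1}h_{N-n}(T^n\bm X^\rightarrow)\). Under \(\Pr_{\nu_\theta}\) the shift is measure preserving (invariance of \(\nu_\theta\)) and ergodic (\cref{thm:ergodicaug}). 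Since \(h_m\to h\) \as and \(\sup_m|h_m|\in L^1(\Pr_{\nu_\theta})\) by \cref{thm:h}, Maker's ergodic theorem applies directly and gives \(\frac1N\sum_{n=0}^{N-1}h_{N-n}(\bm X_n^\rightarrow)\to\E_{\nu_\theta}[h]\), \(\Pr_{\nu_\theta}\)-\as, even though the horizon \(N-n\) shrinks along the sum.

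To identify the limit with \(D := \partial_\theta\E_{\pi_\theta}[f]|_{\theta_0}\), I would combine \cref{thm:unbiasedness} with the \(L^1\) convergence from \cref{thm:h}. By stationarity each summand satisfies \(\E_{\nu_\theta}[h_{N-n}(\bm X_n^\rightarrow)] = \E_{\nu_\theta}[h_{N-n}(\bm X_0^\rightarrow)]\), so reindexing by \(j=N-n\) yields
\[
D = \E_{\nu_\theta}\Big[\tfrac1N\sum_{n=0}^{N-1}h_{N-n}(\bm X_n^\rightarrow)\Big] = \frac1N\sum_{j=1}^{N}\E_{\nu_\theta}[h_j(\bm X_0^\rightarrow)]
\]
for every \(N\). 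Because \(h_j\to h\) in \(L^1\), the means \(\E_{\nu_\theta}[h_j]\) converge to \(\E_{\nu_\theta}[h]\), hence so does their Cesàro average; as the left side is the constant \(D\), we conclude \(\E_{\nu_\theta}[h]=D\), which combined with the previous paragraph proves consistency for the stationary start.

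The last step, and the main obstacle, is upgrading \(\Pr_{\nu_\theta}\)-\as convergence to \(\Pr_{\bm X_0}\)-\as convergence for \emph{every} starting state; this is where Harris recurrence (\cref{thm:harrisaug}) is essential, the difficulty being the shrinking horizon \(N-n\), since the terms with \(n\) close to \(N\) are truncated before the alternative chain has recoupled. I would split off the genuinely additive part by writing the estimator as \(\frac1N\sum_{n=0}^{N-1}h(\bm X_n^\rightarrow)\) plus a remainder \(\frac1N\sum_{n=0}^{N-1}r_{n,N-n}\), where \(r_{n,m}=h(\bm X_n^\rightarrow)-h_m(\bm X_n^\rightarrow)\). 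The Birkhoff ergodic theorem for positive Harris chains holds for every initial state, so the additive part converges to \(\E_{\nu_\theta}[h]=D\) regardless of \(\bm X_0\). For the remainder, faithfulness gives \(r_{n,m}=0\) unless \(\tau_n>m\), whence \(|r_{n,m}|\le\bar h_n\,\bm1_{\tau_n>m}\) with \(\bar h_n:=2\sup_m|h_m(\bm X_n^\rightarrow)|\in L^1\). Fixing a cutoff \(M\) and separating the indices \(n\le N-M\) (where \(\tau_n>N-n\) forces \(\tau_n>M\)) from the final \(M\) indices, the first group is controlled by the every-start Birkhoff limit \(\E_{\nu_\theta}[\bar h_0\bm1_{\tau_0>M}]\), and the second by \(\frac1N\sum_{n=N-M+1}^{N-1}\bar h_n\to0\), a consequence of \(\bar h_n/n\to0\) (itself implied by convergence of the Cesàro averages of \(\bar h_n\)). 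Letting \(M\to\infty\) and using \(\tau_0<\infty\) \as with dominated convergence forces \(\E_{\nu_\theta}[\bar h_0\bm1_{\tau_0>M}]\to0\), so the remainder vanishes \as for every starting state. The stated reduction to an arbitrary \(X_0\in\mathsf X\) by sampling \(\bm X_0\sim Q(\cdot\xmid X_0)\) is then immediate, since augmenting a point via \(Q\) lands in the state space on which the convergence above already holds.
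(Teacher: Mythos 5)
Your proposal is correct, and its skeleton --- Maker's ergodic theorem for the shrinking-horizon average, plus identification of the limit via \cref{thm:unbiasedness} --- is the same as the paper's proof. Where you genuinely diverge is the upgrade from a stationary start to an arbitrary start. The paper handles this in one line by citing \cref{thm:harrisae}, which (via Meyn--Tweedie) strengthens \emph{both} \cref{thm:birkhoff} and \cref{thm:maker} from ``\(\nu\)-almost every'' to ``every'' starting state under Harris recurrence, so the shrinking-horizon average itself is covered for every \(\bm X_0\). You instead only use the Harris strengthening of the Birkhoff theorem (for the infinite-horizon functionals \(h\), \(\bar h_0\), and \(\bar h_0\bm 1_{\tau_0>M}\)) and re-prove the Maker-type conclusion by hand: you split off \(\frac1N\sum_n h(\bm X_n^\rightarrow)\), bound the truncation remainder by \(\bar h_n\bm 1_{\tau_n>N-n}\) using faithfulness from \cref{ass:coupling}, and dispatch it with the cutoff-\(M\) argument, dominated convergence, and \(\bar h_n/n\to 0\). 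This exploits the specific recoupling structure of the DMH estimator (that \(h_m=h\) once \(\tau_n\le m\)) and buys you independence from the Harris version of Maker's theorem, at the cost of a longer argument; the paper's route is shorter but leans on the general-purpose \cref{thm:harrisae}. Your identification of the limit is also a slight variant worth noting: the paper takes the \(L^1(\Pr_{\nu_{\theta_0}})\) limit of both sides of \cref{thm:unbiasedness} using the \(L^1\) convergence supplied by Maker's theorem, whereas you reindex by stationarity to write the left side as a Ces\`aro mean of \(\E[h_j(\bm X_0^\rightarrow)]\) and invoke only the \(L^1\) convergence \(h_j\to h\) from \cref{thm:h}; both are valid, and yours has the minor advantage of not needing the \(L^1\) part of Maker's conclusion at all.
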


For practical purposes, guarantees on the rate of convergence are of interest.
With geometric ergodicity and stronger moment assumptions, we exploit the stopping functional structure to obtain a central limit theorem along \textcite{ben_alaya_rate_1998}.
Note that this strengthening is essentially the same as a common sufficient assumption for the classical Markov chain central limit theorem (see for example \barecite[Corollary~2]{jones_markov_2004}).
\begin{theorem}\label{thm:clt}
    If \cref{ass:MH,ass:target,ass:performance,ass:coupling} hold, the augmented MH chain is geometrically ergodic (for example by \cref{thm:geomerg}), \(\sup_{m \in \mathbb N} \lvert h_m \rvert \in L^{2+\gamma}(\Pr_{\nu_\theta})\), and \(h_m \to h\) in \(L^{2+\gamma}(\Pr_{\nu_\theta})\) for some \(\gamma > 0\), then the DMH estimator obeys a central limit theorem: there exists \(\sigma \ge 0\) such that
    \begin{equation*}
        \frac{1}{\sqrt N} \sum_{n=0}^{N-1} \left(h_{N-n}(\bm{X}_n^\rightarrow) - \left.\frac{\partial}{\partial \theta} \E_{X \sim \pi_\theta}[f(X)]\right|_{\theta = \theta_0}\right) \xrightarrow[N \to \infty]{\mathcal D} \mathsf N(0,\sigma^2)
    \end{equation*}
    (where the limiting distribution could be degenerate \(\delta_0\) if \(\sigma = 0\)) 
    for interior \(\theta_0 \in \Theta\) and every starting state \(\bm{X}_0 \in \mathsf X \times \mathsf X \times [0,1] \times [0,1]^{\mathbb N}\).
\end{theorem}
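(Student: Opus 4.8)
The plan is to reduce the claim to an infinite-horizon central limit theorem for the stopping functional $h$ plus a vanishing truncation remainder. Write $D = \frac{\partial}{\partial\theta}\E_{X \sim \pi_\theta}[f(X)]\big|_{\theta=\theta_0}$ and $S_N = \sum_{n=0}^{N-1} h_{N-n}(\bm{X}_n^\rightarrow)$, and decompose $S_N = \tilde S_N + R_N$ where $\tilde S_N = \sum_{n=0}^{N-1} h(\bm{X}_n^\rightarrow)$ uses the full functional and $R_N = \sum_{n=0}^{N-1}[h_{N-n} - h](\bm{X}_n^\rightarrow)$ collects the horizon-truncation errors. First I would pin down the centering: by \cref{thm:unbiasedness}, stationarity of $\nu_\theta$ (so $\bm{X}_n^\rightarrow \stackrel{d}{=} \bm{X}_0^\rightarrow$), and the substitution $j = N-n$, one gets $D = \frac1N\sum_{j=1}^N \E_{\nu_\theta}[h_j]$ for every $N$; since $h_m \to h$ in $L^1(\Pr_{\nu_\theta})$ by \cref{thm:h}, the Cesàro averages converge to $\E_{\nu_\theta}[h]$, forcing the constant $D = \E_{\nu_\theta}[h]$. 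It then suffices to prove $\frac1{\sqrt N}(\tilde S_N - ND) \xrightarrow{\mathcal D} \mathsf N(0,\sigma^2)$ and $\frac1{\sqrt N}R_N \to 0$ in probability, and conclude by Slutsky.

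For the infinite-horizon CLT I would invoke the stopping-functional central limit theorem of \cite{ben_alaya_rate_1998}. Geometric ergodicity of the augmented chain supplies geometric $\beta$- (hence $\alpha$-) mixing. The key structural observation is that faithfulness (\cref{ass:coupling}) makes $h(\bm{X}_n^\rightarrow) = W_n\sum_{k=1}^{\tau_n-1}[f(Y_{n,k}) - f(X_{n+k})]$ depend on the tail only up to the recoupling time $\tau_n$, so $h$ is genuinely a stopping functional with stopping time $\tau_n$. The square-integrability of $\tau$ from \cref{ass:coupling}, together with $\sup_m\lvert h_m\rvert \in L^{2+\gamma}(\Pr_{\nu_\theta})$ and $h_m \to h$ in $L^{2+\gamma}(\Pr_{\nu_\theta})$, furnish exactly the moment and tail-truncation conditions their theorem requires. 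This yields a CLT for $\frac1{\sqrt N}(\tilde S_N - N\E_{\nu_\theta}[h])$ with $\sigma^2 = \Var_{\nu_\theta}(h(\bm{X}_0^\rightarrow)) + 2\sum_{k \ge 1}\Cov_{\nu_\theta}(h(\bm{X}_0^\rightarrow), h(\bm{X}_k^\rightarrow))$, the series converging by the mixing and moment bounds.

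To show the remainder is negligible I would use that $h_m = h$ whenever $\tau \le m$, so $\lvert h - h_m\rvert \le 2\sup_{m'}\lvert h_{m'}\rvert\,\bm{1}_{\{\tau > m\}}$. Setting $a_m = \E_{\nu_\theta}\lvert h - h_m\rvert$, Hölder's inequality with exponents $2+\gamma$ and $(2+\gamma)/(1+\gamma)$ gives $a_m \le 2\lVert \sup_{m'}\lvert h_{m'}\rvert \rVert_{L^{2+\gamma}}\,\Pr_{\nu_\theta}(\tau > m)^{(1+\gamma)/(2+\gamma)}$, and Markov's inequality on $\E[\tau^2] < \infty$ yields $\Pr_{\nu_\theta}(\tau > m) \le \E[\tau^2]/m^2$. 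The resulting bound $a_m = O(m^{-2(1+\gamma)/(2+\gamma)})$ is summable precisely because $2(1+\gamma)/(2+\gamma) > 1$ for every $\gamma > 0$, so $\E_{\nu_\theta}\lvert R_N\rvert \le \sum_{j=1}^N a_j$ stays bounded uniformly in $N$, giving $\frac1{\sqrt N}\E_{\nu_\theta}\lvert R_N\rvert \to 0$ and hence $\frac1{\sqrt N}R_N \to 0$ in $L^1$, thus in probability.

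Both limit statements are first obtained under the stationary start $\nu_\theta$; to transfer them to an arbitrary deterministic $\bm{X}_0$ I would couple the chain started at $\bm{X}_0$ to a stationary copy, which under geometric ergodicity meet at an \as finite time $T$, after which the tail functionals coincide. The two partial sums then differ only in the first $T$ terms, and since $T$ is \as finite this discrepancy is $o(\sqrt N)$, leaving the limit law unchanged. Assembling the three pieces by Slutsky delivers the stated CLT. I expect the main obstacle to be the rigorous application of the stopping-functional CLT of \cite{ben_alaya_rate_1998}: unlike the classical Markov chain CLT, $h$ is a functional of the entire future rather than of a single state, and the argument hinges on recognizing the recoupling time as the effective random horizon and matching its moments against the integrability of $h$ to verify their hypotheses.
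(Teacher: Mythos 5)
Your proposal is correct, and it shares the paper's skeleton --- an infinite-horizon CLT for \(h\) via the stopping-functional theorem of \cite{ben_alaya_rate_1998} (geometric mixing from geometric ergodicity, stopping-time moments from \cref{ass:coupling}, functional moments by hypothesis), a vanishing truncation remainder, and Slutsky --- but your remainder argument is genuinely different and more elementary. The paper works in \(L^2\): it observes the residual sum is zero-mean in stationarity (by \cref{thm:unbiasedness}) and expands its variance, handling the diagonal terms by a Ces\`aro argument and the cross terms via a covariance-bound variant of Theorem~3 of \cite{ben_alaya_rate_1998} under the uniform moment hypothesis. You instead work in \(L^1\): since faithfulness gives \(h_m = h\) on \(\{\tau \le m\}\), you get \(\lvert h - h_m \rvert \le 2 \sup_{m'} \lvert h_{m'} \rvert \, \bm{1}_{\{\tau > m\}}\), and H\"older (exponents \(2+\gamma\) and \((2+\gamma)/(1+\gamma)\)) plus Markov on \(\E[\tau^2]\) yield \(\E\lvert h - h_m \rvert = O\bigl(m^{-2(1+\gamma)/(2+\gamma)}\bigr)\), summable for every \(\gamma > 0\), so \(\E\lvert R_N \rvert\) is uniformly bounded and \(R_N/\sqrt{N} \to 0\) in \(L^1\). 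This buys you a cleaner proof that bypasses the mixing-based covariance estimates entirely (indeed it never uses the \(L^{2+\gamma}\) convergence of \(h_m\), only the envelope), at the price of requiring \(\E_{\Pr_{\nu_\theta}}[\tau^2] < \infty\), i.e.\ second moments of the recoupling time under the stationary law rather than pointwise --- but the paper's own bounds (the constant \(B_m\), and the proof of \cref{thm:h}) rely on the same implicit strengthening of \cref{ass:coupling}, so you are on equal footing. The remaining difference is the transfer to arbitrary starting states: the paper invokes Harris recurrence with \cite[Proposition~17.1.6]{meyn_markov_2009}, whereas you build an explicit successful coupling with a stationary copy; this is valid, but two clarifications are needed to make it complete: exact meeting in a.s.\ finite time follows from total-variation convergence (a standard but nontrivial fact), and the coupling must be of the \emph{augmented} chains so that the tail functionals --- not merely the states --- coincide after the meeting time, after which the finitely many differing terms are a.s.\ \(O(1)\) and hence negligible at scale \(\sqrt{N}\).
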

The hypotheses on \(h\) in the above theorem could, if one strengthens \cref{ass:target,ass:performance,ass:coupling} to impose the existence of moments up to order \(4 + \gamma\), be recovered by adapting the proofs of \cref{thm:weightmoments,thm:h} following a previous remark.

\section{Extensions}
We conclude the theoretical treatment with comments on a few extensions to the above results which are of interest to practitioners. %

\subsection{Performance functionals}
\Cref{ass:performance} is stated in terms of a real-valued performance function for a single state \(f(X_n)\), but this can easily be extended.
The extension to vector-valued performance functions is an immediate consequence of applying the theorems componentwise.
A further benefit of the general ergodic results used is that the extension to performance functionals \(f(X_{n},X_{n+1},\dotsc,X_{n+L})\) or a fixed finite number of states \(L\) is straightforward; the proof of \cref{thm:unbiasedness} only needs to be altered so that the partition accounts for that the \(n\)th term can be affected by perturbations in the first \(n+L\) states of the chain.
This allows us to differentiate for example autocovariance, as demonstrated in \cref{sec:proposaltuning}.

\subsection{Pruning}
For simplicity, the mathematical formalization does not encompass the pruning scheme described in the original introduction of the DMH estimator in \textcite{arya_differentiating_2023-1}.
Finite-horizon unbiasedness in stationarity of the original weight-based pruning estimator follows from the remarks by \textcite[Section~B.5]{arya_automatic_2022}, but this estimator does not directly admit the tail functional form of \(h_m\) as future alternatives also depend on the past.
However, simpler schemes such as independent subsampling\footnote{See also \parencite[Section~4.4]{heidergott_gradient_2010} for discussion of such a scheme in the context of MVD.} are clearly consistent by a simple modification of the proofs for \cref{thm:unbiasedness,thm:consistency}.

\subsection{Parameter-dependent proposals}
In the preceding, we have assumed that the proposal kernel \(q\) is independent of the parameter \(\theta\).
However, this is a priori not the case in many practical applications; often one wishes to incorporate gradient information of \(g_\theta\) into the proposal, as in MALA and HMC, or proposal parameters themselves could be the target of optimization in order to tune the sampler.
Nevertheless, depending on the application it may not be necessary to consider the sensitivity of the proposal.
When the main purpose of a gradient-informed proposal is MCMC performance, one can fix the parameter in the proposal for each run, as the marginal expectation in stationarity is independent of the proposal.

Nevertheless, the actual implementation of the algorithm can be extended easily to a truly parameter-dependent proposal with \texttt{StochasticAD.jl}, but the introduction of general non-discrete sensitivity unfortunately significantly complicates the formalization in the theoretical framework above.
We sketch some of the necessary extensions in \cref{supp:parameterdependent}.
In special cases, there may be significant simplifications that allow us to incorporate the sensitivity.
For example, if proposal admits a linear reparameterization, one can transfer the ergodicity results of the chain to pathwise gradient estimates.
This is the case in the RWMH proposal tuning example in \cref{sec:proposaltuning}.

Finally, on discrete spaces or with a proposal kernel that itself factorizes into a continuous and a discrete part, such as the Barker proposal of \textcite{livingstone_barker_2022} which incorporates target gradient information, one could instead introduce new alternatives for diverging paths due to the proposal, just as we have done for the accept-reject step.

\section{Numerical examples}\label{sec:examples}

\subsection{Prior sensitivity analysis}
In \cite{kallioinen_detecting_2024} the authors introduce a distance-based metric to diagnose sensitivity of the posterior with respect to prior or likelihood by \emph{power-scaling}, which is computed using post hoc importance sampling of the MCMC chain.
In this example, we demonstrate how one can automatically obtain an equivalent to the simpler quantity-based sensitivity metric in \textcite[Eq.~8]{kallioinen_detecting_2024} from our framework, illustrating how our general methodology can form a building block in sensitivity analysis problems.
Letting \(p(\phi)\) denote the prior probability and \(p(x \xmid \phi)\) denote the likelihood,
power-scaling the prior introduces a hyperparameter \(\theta\) into the posterior
\begin{equation*}
    p(\phi \xmid x) \propto p(x \xmid \phi) \left[p(\phi)\right]^{2^\theta}
\end{equation*}
where the scaling \(2^\theta\) is suggested \parencite[Section~2.4.3]{kallioinen_detecting_2024} to obtain a natural symmetry at zero.
The simple prior sensitivity metric then corresponds to the derivative of the posterior mean \(\frac{\partial}{\partial \theta}\E[\phi]\) at \(\theta = 0\).

We consider the case study in \textcite[Section~5.1]{kallioinen_detecting_2024}, where a prior-data conflict is diagnosed in a Bayesian linear regression model.
The goal is to predict body fat percentages given 13 simple body measurements, using the \texttt{bodyfat} dataset \parencite{johnson_fitting_1996}.
The model is
\begin{gather*}
y_i \sim \mathsf{N}(\mu_i, \sigma^2), \qquad \mu_i = \beta_0 + \sum_{k=1}^{13} \beta_k x_{ik}, \\
\beta_0 \sim t_3(0,9.2), \qquad \beta_k \sim \mathsf{N}(0,1), \qquad \sigma \sim t_3^+(0,9.2)
\end{gather*}
where the standard Gaussian priors on the regression coefficients are selected with the goal of noninformativity.

We implement the model with a \texttt{Turing.jl} specification \parencite{ge_turing_2018} which conveniently takes care of target density computations and transforming \(\sigma\) to an unbounded parameter space, only needing a small amount of bridging code to introduce the power-scaling hyperparameter.
To reproduce the original case study faithfully, we also center the covariates and set the prior mean for \(\beta_0\) to the response mean.
For the DMH, we use a Gaussian random walk proposal with a reflection coupling, adjusting for covariate scales by preconditioning with the ordinary least squares covariances.
The derivative estimate is computed with 4 chains of 350\,000 MH steps where 100\,000 steps are burn-in starting from a zero initial parameter vector.

\begin{figure}[btp]
    \centering
\begin{minipage}[t]{0.4\textwidth}
    \includegraphics[width=\textwidth]{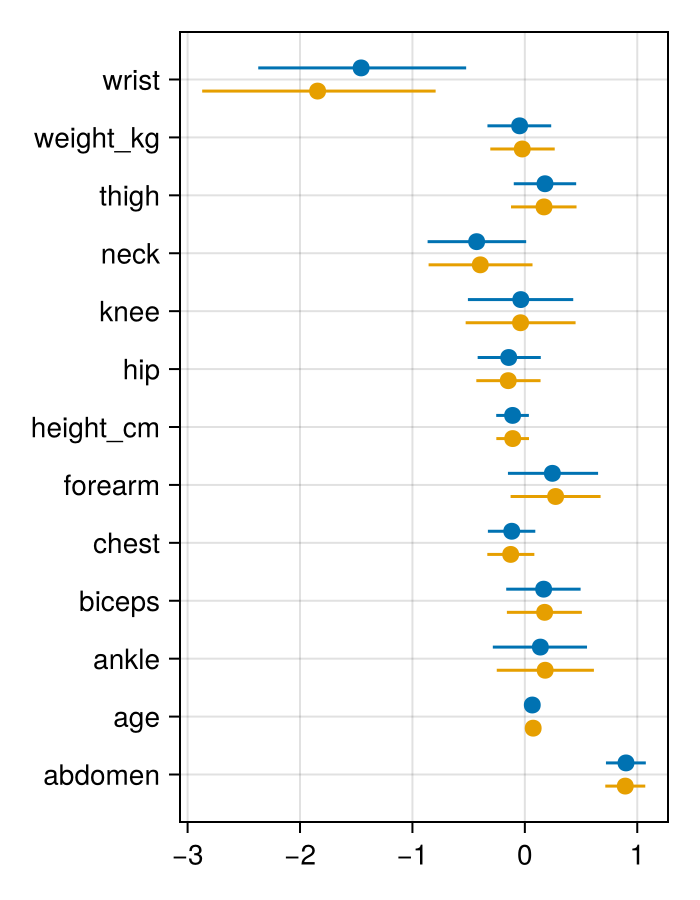}\\
    {(a) Posterior mean estimate, with 95\% credible intervals. (Intercept and \(\sigma\) omitted due to differing scales.)}
\end{minipage}
\quad
\begin{minipage}[t]{0.4\textwidth}
    \includegraphics[width=\textwidth]{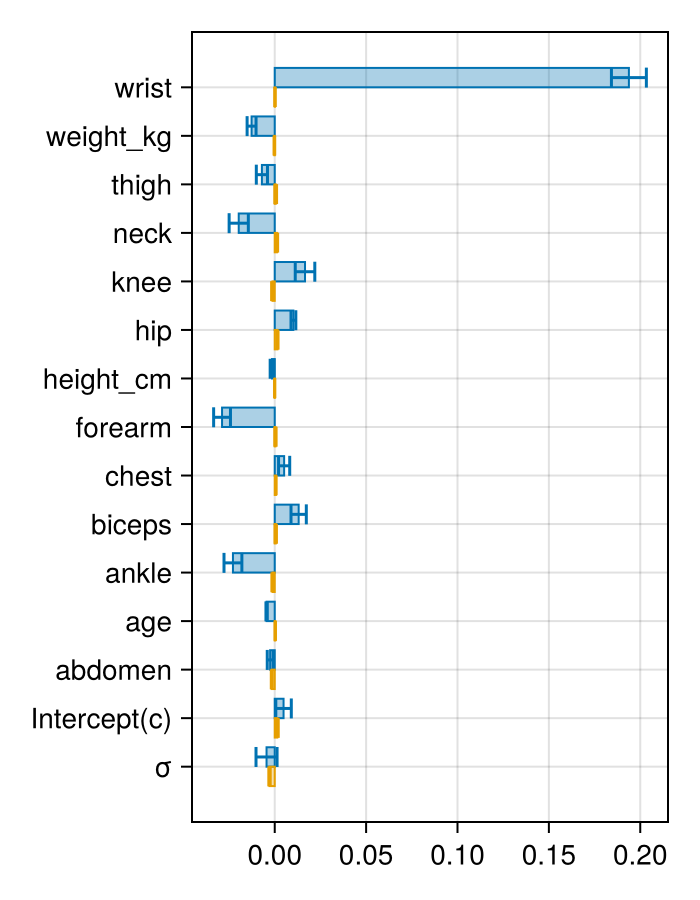}
    {(b) Sensitivity as gradient estimate for posterior means, with cross-chain standard errors.}
\end{minipage}
    \caption{Case study of prior sensitivity analysis on \texttt{bodyfat} dataset. Blue/top with original prior, orange/bottom with adjusted prior to resolve conflict.}
    \label{fig:priorsense}
\end{figure}

In \cref{fig:priorsense} we see the resulting parameter estimates and derivatives.
A clear non-zero sensitivity is reported for the \texttt{wrist} coefficient, which agrees with the results in the original case study.
To resolve the prior-data conflict, we follow \textcite{kallioinen_detecting_2024} and adjust the priors to account for the empirical scales of the covariates, so that \(\beta_k \sim \mathsf{N}(0,(2.5 s_y / s_{x_k})^2)\).
The posterior mean of the \texttt{wrist} coefficient is now subject to less shrinkage towards the prior and the sensitivity is reduced, as expected.
Although the computational properties of the DMH estimate are naturally worse than that of the original, specialized method, we emphasize that no additional theoretical derivation and little code was required to obtain these analogous results.

\subsection{Proposal kernel tuning}\label{sec:proposaltuning}
The performance of MCMC is highly dependent on the proposal distribution.
Different strategies have been used to find suitable proposals, such as adaptive MCMC methods \parencite{andrieu_tutorial_2008} or variational tuning methods \parencite{campbell_gradient_2021}.
In this example we consider minimizing the \(1\)-lag autocovariance, defined in stationarity as
\begin{equation*}
    \gamma_1 = \E_{\Pr_\nu}[(X_0 - \E_\pi[X]) (X_1 - \E_\pi[X])] = \E_{\Pr_\nu}[X_0 X_1] - \E_\pi[X]^2,
\end{equation*}
with respect to the proposal ``step size'' \(\sigma\) in Gaussian random walk MH.
For a multidimensional target, we will minimize the determinant of the corresponding cross-covariance matrix.
Autocovariance is chosen as a proxy objective for mixing speed, with the underlying intuition that an efficient sampler should yield as uncorrelated samples as possible.
This objective is related to the expected squared jump distance of the chain \parencite{sherlock_optimal_2009,sherlock_random_2010}.

We will drop the derivative of the second term in \(\gamma_1\) in our estimator, as it vanishes in stationarity, and hence our target derivative to estimate for optimization is
\(\frac{\partial}{\partial \sigma} \E_{\Pr_{\nu_\sigma}}[X_0 X_1]\).
This problem fits into our framework if we consider two of the extensions to our theoretical result since the \(1\)-lag autocovariance depends on pairs of states and the proposal is parameter-dependent.
For consistency higher moments of the target suffice; we elaborate on the theoretical details in \cref{supp:proposaltuning}.

First, we compare with the classical theory for \(d\)-dimensional isotropic Gaussian distributions, for which the optimum \(\sigma = 2.38/\sqrt{d}\) is well-known \textcite{roberts_optimal_2001}.
The proposal covariance is taken as \(\sigma^2 I\), giving us a single parameter to tune.
In \cref{fig:proposaltuning.gauss} results for a range of \(\sigma\) are shown, where each estimate is based on 100 MCMC chains with 250\,000 steps starting from zero.
The suggested objective recovers the theoretical optimum, which indicates that it is a reasonable choice for proposal tuning.
\begin{figure}[btp]
    \centering
\begin{minipage}[t]{0.32\textwidth}
    \includegraphics[width=\textwidth]{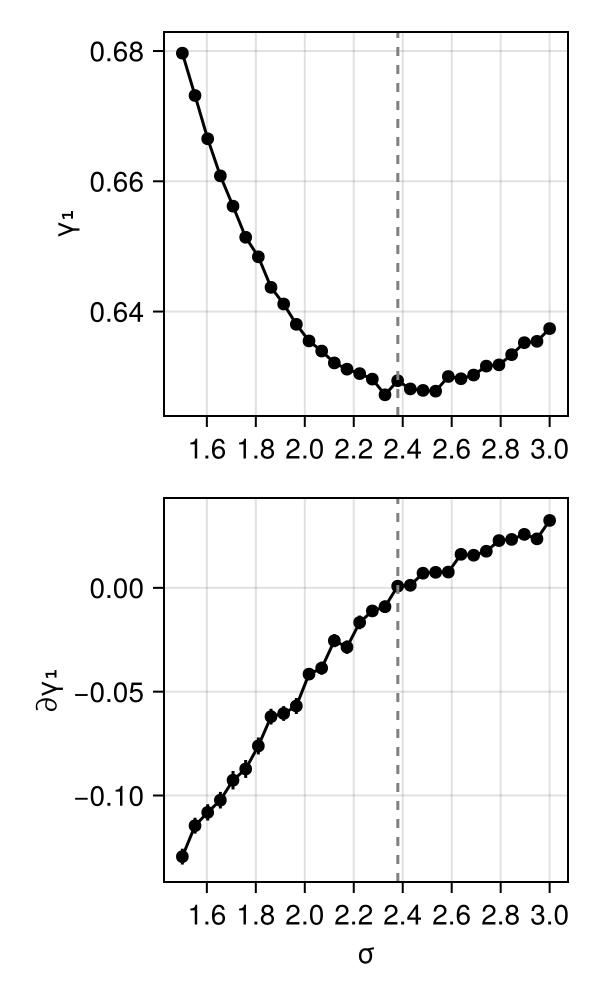}\\
    \centering{(a) \(d = 1\)}
\end{minipage}
\hfill
\begin{minipage}[t]{0.32\textwidth}
    \includegraphics[width=\textwidth]{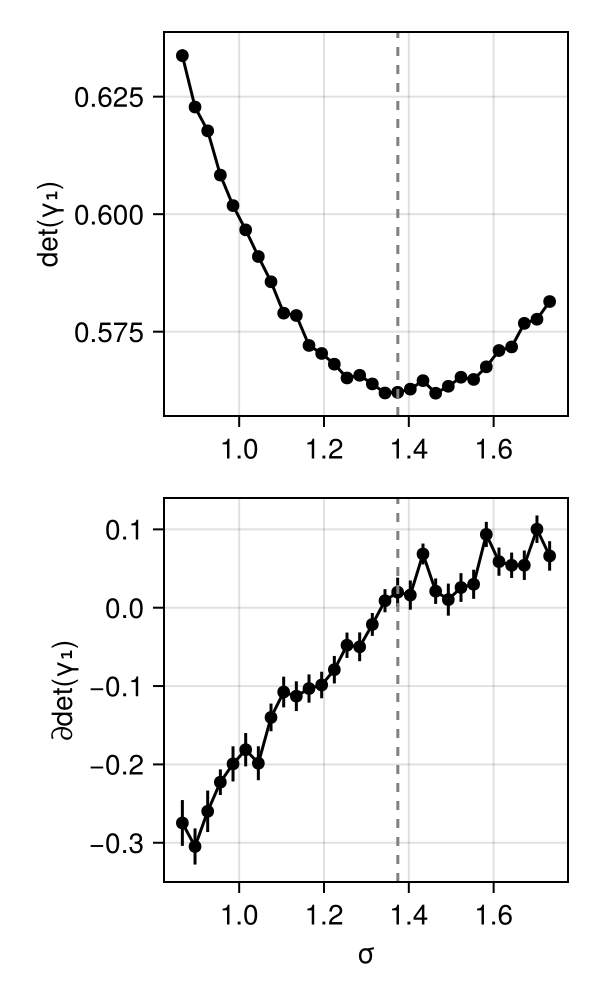}\\
    \centering{(b) \(d = 3\)}
\end{minipage}
\hfill
\begin{minipage}[t]{0.32\textwidth}
    \includegraphics[width=\textwidth]{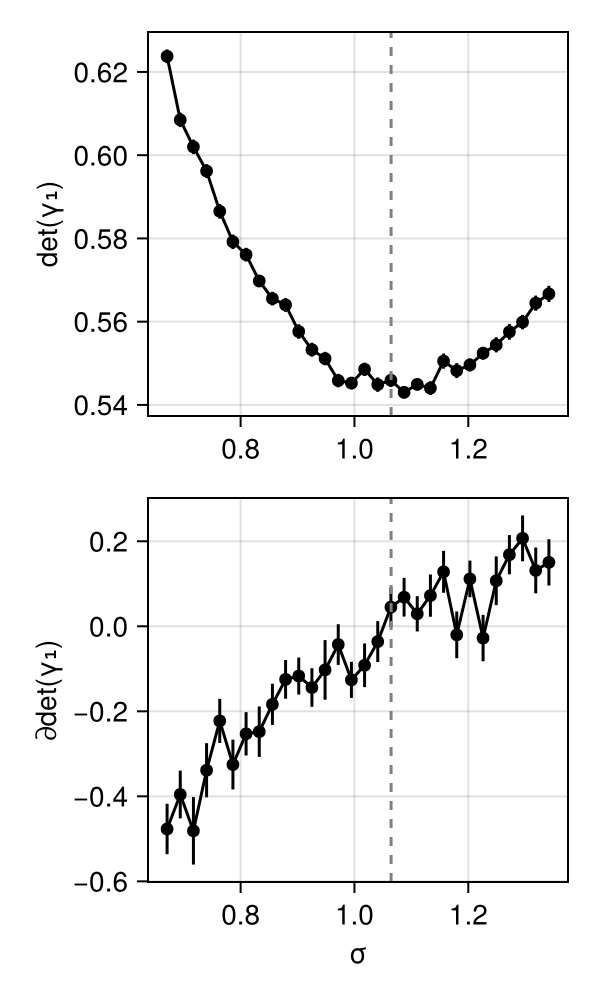}\\
    \centering{(c) \(d = 5\)}
\end{minipage} 
    \caption{Tuning an isotropic RWMH proposal targeting a \(d\)-dimensional isotropic Gaussian. Estimated 1-lag autocorrelation (top) and corresponding derivative (bottom) with cross-chain standard errors. Gray dashed line is the theoretical optimum at \(\sigma = 2.38/\sqrt d\).}
    \label{fig:proposaltuning.gauss}
\end{figure}

We now use our DMH gradient estimator in the Adam optimizer \parencite{kingma_adam_2015} to fit a full proposal covariance to some multidimensional distributions.
Each iteration will run a single chain for 250\,000 steps, and the optimizer is run for 800 iterations starting with a diagonal proposal covariance matrix.
The \(d \times d\) covariance matrix is parameterized in an unconstrained domain by the \(d(d+1)/2\) non-zero elements in its Cholesky factor, essentially learning component scales and pairwise correlations.
We show three two-dimensional distributions: a single Gaussian with correlation for which the optimum is known, a toy non-Gaussian mixture ``Dual Moon'' \parencite{campbell_gradient_2021}, and a challenging Rosenbrock-type distribution \parencite{rosenbrock_automatic_1960,goodman_ensemble_2010,pagani_ndimensional_2022}.
The learning rate of Adam is set to \(0.005\) for the first two targets and \(0.003\) for the Rosenbrock target.

The resulting proposals are shown in \cref{fig:proposaltuning.opt}.
We recover essentially the known optimal proposal in the Gaussian case and a very plausible proposal for the ``Dual Moon'' with good diagnostics that arguably performs better than the rule of thumb in terms of effective sample size.
For the Rosenbrock-type target we have slow tail exploration (compare discussion in \barecite{pagani_ndimensional_2022}) that makes the optimization slow to converge and prone to sticking in local optima, but the resulting large proposal perhaps slightly improves in the first coordinate on the rule of thumb.
Note that the acceptance rates of the resulting proposals for the latter two targets are lower than what would naïvely be targeted using acceptance rates derived from Gaussian asymptotics, as is expected \parencite{sherlock_optimal_2009}.
Full diagnostics are given in \cref{app:proposaltuning.diag}.
Finally, though this example is comparatively computationally demanding, we repeat a remark from \textcite{roberts_optimal_2001} that it is unnecessary to tune to the precise optimum, as reasonable efficiency can be achieved with sufficiently close parameters; indeed we observe diminishing returns in the optimizer indicating a quite flat objective once having found a reasonable proposal.

\begin{figure}[btp]
    \centering
\begin{minipage}[t]{0.32\textwidth}
    \includegraphics[width=\textwidth]{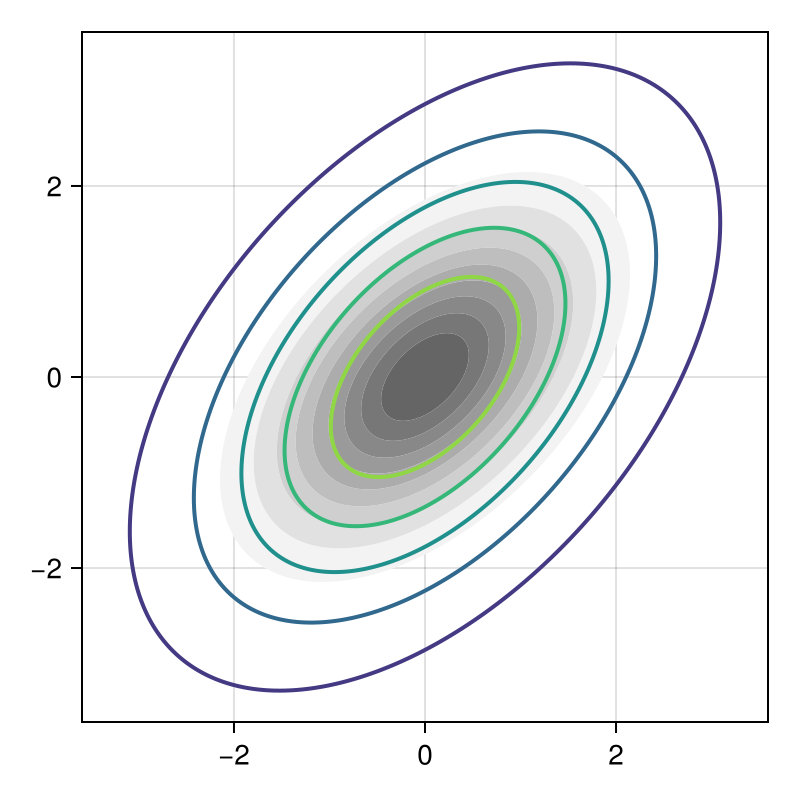}\\
    {(a) Gaussian with unit variance and correlation \(0.5\)}
\end{minipage}
\hfill
\begin{minipage}[t]{0.32\textwidth}
    \includegraphics[width=\textwidth]{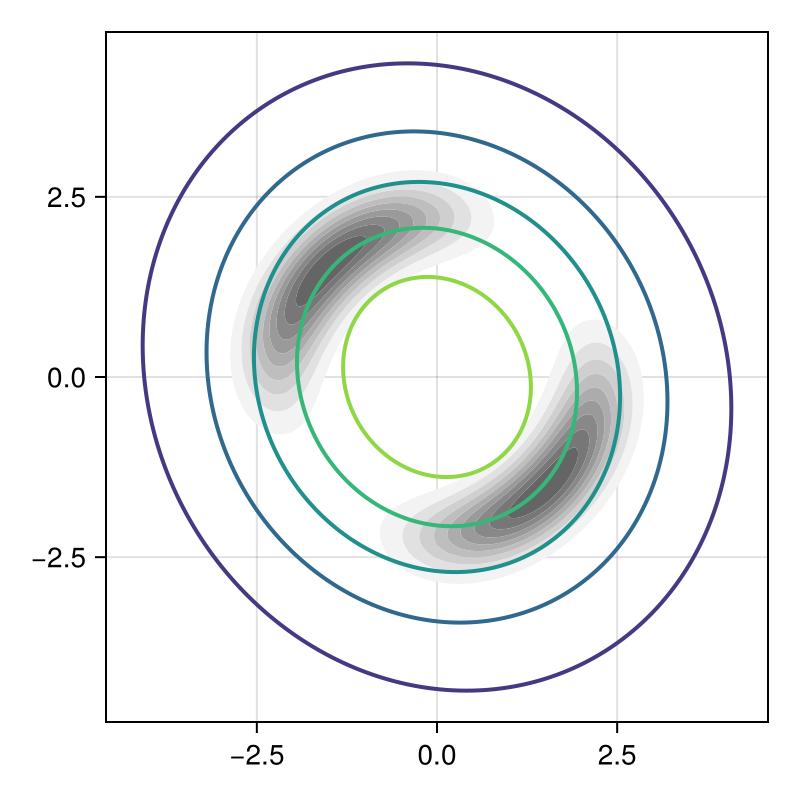}\\
    {(b) Rotated ``Dual moon'' mixture landscape}
\end{minipage}
\hfill
\begin{minipage}[t]{0.32\textwidth}
    \includegraphics[width=\textwidth]{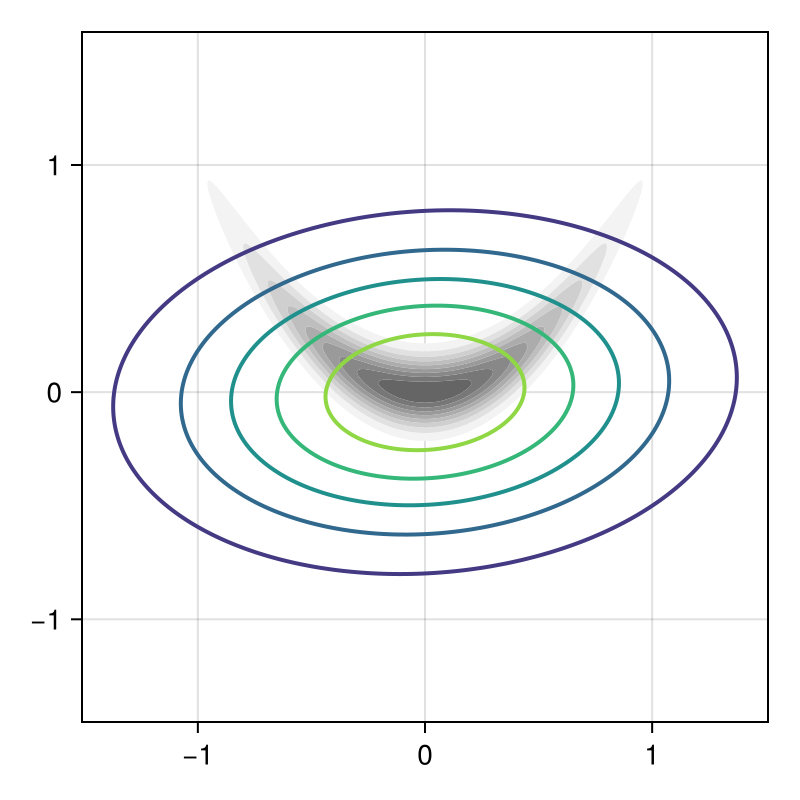}\\
    {(c) Rosenbrock \(\pi(x_1,x_2) \propto \exp(-50(x_2 - x_1^2)^2 - \frac{5}{2}x_1^2)\)}
\end{minipage} 
    \caption{Fitting a Gaussian proposal distribution for RWMH with a given target by minimizing the \(1\)-lag autocorrelation. The contour of the final proposal distribution is shown overlaid on the contour of the target distribution centered in the origin in gray.}
    \label{fig:proposaltuning.opt}
\end{figure}

\begin{appendix}
\counterwithin{theorem}{section}

\section{Proofs of Section~\ref{sec:main.Kaug}}\label{app:proofs.Kaug}
\begin{definition}[\(\phi\)-irreducibility]
 A Markov chain on \((\mathsf X, \mathscr X)\) with transition kernel \(K\) is \emph{\(\phi\)-irreducible} for a non-trivial \(\sigma\)-finite measure \(\phi\) on \((\mathsf X, \mathscr X)\) if with \(\kappa(d x' \xmid x) = \frac 12 \sum_{n = 0}^\infty 2^{-n} K^n(d x' \xmid x)\) one has for all \(x \in \mathsf X\) that \(\phi \ll \kappa(\cdot \xmid x)\), where \(\ll\) denotes absolute continuity, that is for all \(A \in \mathscr X\) we have \(\kappa(A \xmid x) = 0 \implies \phi(A) = 0\).
\end{definition}
There exist more commonly used equivalent definitions of \(\phi\)-irreducibility \parencite[Proposition~4.2.1]{meyn_markov_2009}, but this specific formulation is useful in the following result:
\begin{proof}[Proof of \cref{thm:irredaug}]
Fix $y = (x, x^\circ, u, \epsilon)$. 
From \eqref{eq:kernelH} it follows that $R(\cdot \xmid y)$ is a Dirac measure in some fixed $x' \in \mathsf X$. Let 
$\kappa = \frac 12 \sum_{n = 0}^\infty 2^{-n} K_{\text{MH}}^n$.
Then
\begin{align*}
    \int f\, d \phi Q &= \iint f(y) Q(d y \xmid \xi) \phi(d \xi)
    = \int  \tfrac{d \phi}{d \kappa(\cdot \xmid x')}(\xi) \left(\int f(y) Q(d y \xmid \xi)\right)  \kappa(d \xi \xmid x') \\
    &= \int f(y)  \tfrac{d \phi}{d \kappa(\cdot \xmid x')} d (\kappa Q)(\cdot \xmid x')
\end{align*}
as the first component of \(y\) is \(\xi\),
so $\phi Q$ has a $(\kappa Q)(\cdot \xmid x')$ density.
Thus 
\begin{equation*}
    \phi Q
    \ll (R\kappa Q)(\cdot \xmid y)
    \ll  \left(\frac 12 \sum_{n=0}^\infty 2^{-n} K_{\text{aug}}^{n + 1}\right)(\cdot \xmid y)
    \ll \left(\frac 12 \sum_{n=0}^\infty 2^{-n} K_{\text{aug}}^{n}\right)(\cdot \xmid y)
\end{equation*}
for arbitrary $y$ and the statement follows.
\end{proof}

\begin{definition}[Harris recurrence]
    A Markov chain \(\{X_n\}_{n \in \mathbb N_0}\) on \((\mathsf X, \mathscr X)\) is \emph{Harris recurrent} (\wrt \(\phi\)) if for all \(A \in \mathscr X\) such that \(\phi(A) > 0\) it holds that \(\Pr(\sum_{n=1}^\infty \bm{1}_A(X_n) = \infty) = 1\), that is the chain \as visits \(A\) infinitely many times.
\end{definition}

\begin{proposition}[\protect{\barecite[Theorem~3]{glynn_new_2023}}]
\label{prop:harris}
The chain \(\{X_n\}_{n \in \mathbb N_0}\) is Harris recurrent if and only if there exists a random time \(\tau \ge 1\) such that $X_\tau$ is independent of the starting value $x_0$ (that is, the distribution of $X_\tau$ under $\mathbb P_{x_0}$ does not depend on $x_0$). 
\end{proposition}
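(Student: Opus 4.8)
The plan is to prove the two implications separately: the forward direction rests on the classical small-set/minorization (Nummelin splitting) machinery, while the reverse direction is a regeneration argument closed off by the second Borel--Cantelli lemma. For the forward direction ($\Rightarrow$), assume the chain is Harris recurrent \wrt a nontrivial $\phi$. I would invoke the standard fact that such a chain admits a \emph{small set}: there exist $C \in \mathscr X$ with $\phi(C) > 0$, an integer $m \ge 1$, a constant $\varepsilon \in (0,1)$, and a probability measure $\nu$ with $K^m(x, \cdot) \ge \varepsilon\, \mathbf 1_C(x)\, \nu(\cdot)$. This yields the splitting $K^m(x, \cdot) = \varepsilon\, \nu(\cdot) + (1 - \varepsilon) R(x, \cdot)$ on $C$ for a residual kernel $R$. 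I would then run an independent $\mathsf{Bernoulli}(\varepsilon)$ coin at each visit of the chain to $C$: on the first success, the state $m$ steps later is, by construction, an exact draw from $\nu$ irrespective of where the chain started. Harris recurrence guarantees that $C$ is visited infinitely often \as from any $x_0$, so a success — hence a regeneration — occurs at some \as finite (randomized) time $\tau$, and $X_\tau \sim \nu$ does not depend on $x_0$.

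For the reverse direction ($\Leftarrow$), suppose such a $\tau \ge 1$ is given, with $X_\tau \sim \nu$ under every $\Pr_{x_0}$. Treating $\tau$ as a randomized stopping time, I would iterate it via the strong Markov property: set $T_1 = \tau$ and, recursively, $T_{j+1} = T_j + \tau^{(j+1)}$, where $\tau^{(j+1)}$ is the analogue of $\tau$ for the chain shifted to start at $X_{T_j}$. Since the law of the shifted $X_{\tau^{(j+1)}}$ equals $\nu$ regardless of $X_{T_j}$, the samples $X_{T_1}, X_{T_2}, \dotsc$ are \iid with common law $\nu$, and each $T_j$ is \as finite. For any $A$ with $\nu(A) > 0$, the events $\{X_{T_j} \in A\}$ are independent with probability $\nu(A) > 0$, so the second Borel--Cantelli lemma forces them to occur infinitely often \as; as the $T_j$ are strictly increasing, the chain visits $A$ infinitely often, which is exactly Harris recurrence \wrt the nontrivial measure $\nu$.

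I expect the forward direction to carry the real technical weight, since producing the small set and minorization for a merely Harris recurrent chain is the substantive ingredient, and is where one must check that the regeneration law $\nu$ genuinely forgets $x_0$. In the reverse direction the delicate point is the iteration: it requires $\tau$ to behave like a stopping time so that the strong Markov property applies and the ``independence of $x_0$'' propagates across successive regenerations. If $\tau$ is given only as an abstract random time, I would first argue that it may be realized as a randomized stopping time on a suitably enriched path space before running the Borel--Cantelli argument.
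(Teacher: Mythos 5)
The paper never proves \cref{prop:harris} itself --- it imports it verbatim from the cited reference --- so your attempt has to stand on its own. Your forward implication is fine and is the standard route: Harris recurrence gives $\psi$-irreducibility, the Polish (hence countably generated) state space guarantees small sets (choosing one with $\phi(C)>0$ is possible since the space is a countable union of small sets), and the Athreya--Ney--Nummelin splitting converts the minorization into a regeneration, so that on the first coin success the state $m$ steps later is an exact draw from $\nu$; apart from bookkeeping when $m>1$ (flip coins only at visits separated by at least $m$ steps so regeneration blocks do not overlap), this is the same machinery the paper itself deploys in the proof of \cref{thm:harrisaug}.

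The reverse implication, however, has a genuine gap. In \cref{prop:harris} the time $\tau$ is an \emph{arbitrary} random time, not a stopping time --- that generality is precisely the point of the cited theorem, and it is what the paper exploits: the time $\inf_i\{S_i=1\}+1$ in the proof of \cref{thm:harrisaug} involves the splitting variables $S_i$, so it is not adapted to the chain's own filtration, and conditioning on it distorts the transition law. You therefore cannot iterate $\tau$ via the strong Markov property, and your proposed repair --- realizing $\tau$ as a randomized stopping time on an enriched space --- is not available: a time that anticipates the future cannot in general be replaced by a (randomized) stopping time with the same marginal law of $X_\tau$, and indeed the existence of such a regenerative stopping time is, by your own forward direction, \emph{equivalent} to Harris recurrence, so this step is circular. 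The correct argument dispenses with the strong Markov property entirely. Let $\mu$ be the common law of $X_\tau$ and fix $A$ with $\mu(A)>0$. Since $\tau\ge 1$ is finite almost surely,
\begin{equation*}
    L(x,A) := \Pr_x(\exists\, n \ge 1 : X_n \in A) \ \ge\ \Pr_x(X_\tau \in A) = \mu(A) > 0 \quad\text{for every } x .
\end{equation*}
By the Markov property, $\Pr_x(\exists\, k > n : X_k \in A) = \E_x[L(X_n,A)] \ge \mu(A)$ for every $n$, and these events decrease to $H := \{X_n \in A \text{ infinitely often}\}$, so $Q(x,A) := \Pr_x(H) \ge \mu(A)$ for every $x$. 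Finally, Lévy's zero--one law combined with the Markov property gives $\Pr_x(H \mid \mathcal F_n) = Q(X_n,A) \to \bm{1}_H$ almost surely; the left-hand side is bounded below by $\mu(A)>0$, so $\bm{1}_H = 1$ $\Pr_x$-almost surely. Thus every $A$ with $\mu(A)>0$ is visited infinitely often from every starting point, i.e., the chain is Harris recurrent with respect to $\mu$; this solidarity-type argument is classical \parencite[Chapter~9]{meyn_markov_2009}, and it is the ingredient your proposal is missing.
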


\begin{proof}[Proof of \cref{thm:harrisaug}]
We fix a starting state 
$y = (x, x^\circ, u, \epsilon)$ and 
write $x' = \gamma(y)$ for the map induced by the Dirac kernel $R$. Consider the Markov chain $X$ with kernel $K_\mathrm{MH}$ starting in $X_1 = x'$.
Since $X$ is Harris recurrent, and our state space is Polish and hence countably generated, we can use the Athreya--Ney--Nummelin splitting technique \parencite{nummelin_splitting_1978,athreya_limit_1978,mykland_regeneration_1995}.
Hence there exists a function $s$ with positive expectation under the invariant measure and a probability measure $\rho$ such that
\[
K_\mathrm{MH}(x, \cdot) \ge s(x)\rho(
\cdot) 
\]
which can then be used to construct a Markov chain on an augmented state space as follows: given $X_i$, sample a Bernoulli $S_i$ with success probability $s(X_i)$; conditional on success, draw $X_{i+1}$ from $\rho$, and conditional on failure, draw from the residual kernel
\[
\frac{ K_\mathrm{MH}(x, \cdot) - s(x)\rho(
\cdot)}{1-s(x)}.
\]
We may now augment the split chain setting $Y_n = (X_n, X^\circ_n, U_n, \epsilon_n)$. Here $X^\circ_n, U_n, \epsilon_n$ are draws from the \emph{conditional} law of the augmented chain as defined earlier given $X_n$ and $X_{n+1}$. Then $Y_n$ is distributed as the augmented chain in \cref{def:aug}.
The random time $\tau = \inf_i \{S_i = 1\} + 1$ renders $X_\tau$ independent from $x'$, moreover $Y_{\tau + 1}$ is independent of $y_0$. By \cref{prop:harris}, the augmented chain is Harris recurrent.
\end{proof}

\begin{proof}[Proof of \cref{thm:ergodicaug}]
    By assumption \(K_\mathrm{MH}\) is \(\pi_\theta\)-irreducible.
    From \cref{thm:irredaug} we obtain that \(K_\mathrm{aug}\) is  \(\nu_\theta\)-irreducible.
    The statement now follows by a standard Markov chain result; see for example \textcite[Theorem~5.2.6, Corollary~9.2.16]{douc_markov_2018}.
    Note that we need not worry about aperiodicity for the convergence of averages.
\end{proof}

For the final proof, we recall a useful characterization of geometric ergodicity:
\begin{proposition}[\barecite{gallegos-herrada_equivalences_2023}, Theorem~1.ii]
\label{prop:defgeomerg}
    Assume that the measurable space \((\mathsf X, \mathscr X)\) is countably generated.
    Let \(K\) be the transition kernel of a \(\phi\)-irreducible, aperiodic Markov chain on \((\mathsf X, \mathscr X)\) with invariant measure \(\pi\).
    Then the chain is geometrically ergodic if and only if there exists \(A \in \mathscr X\) with \(\pi(A) > 0\) such that for all \(x \in A\), there exist \(\rho_x < 1\) and \(C_x < \infty\) where \(\lVert K^n(\cdot \xmid x) - \pi(\cdot) \rVert_\mathrm{TV} \le C_x \rho_x^n\) for all \(n \in \mathbb N\).
\end{proposition}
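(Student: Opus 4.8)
The plan is to prove the two implications separately; the forward one is immediate and the reverse one carries all the content. For the forward direction, if \(K\) is geometrically ergodic there is a single rate \(\rho < 1\) and a finite function \(M\) with \(\lVert K^n(\cdot \xmid x) - \pi \rVert_\mathrm{TV} \le M(x)\rho^n\) for \(\pi\)-\ae \(x\) and all \(n\); taking \(A\) to be the full-measure set on which this holds, with \(\rho_x = \rho\) and \(C_x = M(x)\), gives the stated local bound on a set of positive measure.

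For the reverse direction I would first \emph{upgrade} the pointwise, state-dependent rates to a uniform rate on a smaller \(\pi\)-positive set. For rational \(\rho \in (0,1)\) and \(k \in \mathbb N\) put \(A_{\rho,k} = \{x \in A : \lVert K^n(\cdot \xmid x) - \pi \rVert_\mathrm{TV} \le k\rho^n \text{ for all } n\}\). Since \((\mathsf X, \mathscr X)\) is countably generated, \(x \mapsto \lVert K^n(\cdot \xmid x) - \pi \rVert_\mathrm{TV}\) is measurable for each fixed \(n\) (the supremum defining the total variation may be taken over a countable generating algebra), so each \(A_{\rho,k}\) is measurable; moreover \(A = \bigcup_{\rho,k} A_{\rho,k}\), as each \(x \in A\) admits rational \(\rho > \rho_x\) and integer \(k \ge C_x\). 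Because \(\pi(A) > 0\), countable additivity yields a pair \((\rho,k)\) with \(\pi(A_{\rho,k}) > 0\); on \(\hat A := A_{\rho,k}\) the convergence now holds with the \emph{same} constants. Using \(\phi\)-irreducibility and aperiodicity I would then realize a \(\pi\)-positive small subset of \(\hat A\) and apply the Athreya--Ney--Nummelin splitting \parencite{nummelin_splitting_1978,athreya_limit_1978} to produce an accessible atom whose occupation probabilities inherit the uniform geometric convergence.

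The crux is then a renewal argument. Writing \(u(n)\) for the probability that the split chain occupies the atom at time \(n\), the inherited bound gives \(\lvert u(n) - u_\infty \rvert \le k\rho^n\), that is geometric convergence of the return-probability sequence to its limit \(u_\infty\). By Kendall's renewal theorem \parencite{meyn_markov_2009}, geometric decay of \(u(n) - u_\infty\) is equivalent to the first-return-time distribution of the atom having a generating function analytic strictly beyond the unit disk, hence to geometric tails of the return time. Geometric return times to an accessible atom (equivalently, to a small set) then yield a geometric drift condition and thus geometric ergodicity of \(K\) by the classical equivalence \parencite{meyn_markov_2009,gallegos-herrada_equivalences_2023}, closing the reverse implication.

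The main obstacle is exactly this last transfer: the hypothesis controls the chain only when it is started inside the good set, so bounding the lengths of excursions \emph{away} from it would appear to require additional information. The resolution is that Kendall's theorem shows geometric convergence from the set already encodes geometric excursion tails, since slowly returning excursions would themselves slow the convergence; making this rigorous through the split chain, together with the measurable uniform-rate upgrade, is where the real work lies, whereas the reduction to a small set and the passage from geometric return times to geometric ergodicity are standard.
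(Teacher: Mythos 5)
The paper never proves this proposition; it is imported verbatim from \cite{gallegos-herrada_equivalences_2023}, so the relevant comparison is with the argument in that literature (which goes back to Roberts and Rosenthal's proof for hybrid chains). Your sketch reconstructs essentially that route: the forward implication straight from the definition, the countable-union upgrade to uniform constants \(k, \rho\) on a \(\pi\)-positive subset \(\hat A\) (your measurability argument via a countable generating algebra is the right one), a small set inside \(\hat A\), Nummelin splitting to an accessible atom, Kendall's renewal theorem to convert geometric convergence of the occupation sequence into geometric tails of the return time, and the standard equivalence between geometric return-time tails at an accessible atom and geometric ergodicity. In outline this is correct and matches the source.

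The one step you assert rather than secure is the inheritance \(\lvert u(n) - u_\infty \rvert \le k\rho^n\). Starting from the atom means starting from the regeneration measure \(\nu\) of the minorization \(K^m(x, \cdot) \ge \varepsilon\,\nu(\cdot)\) for \(x \in C \subseteq \hat A\); your hypothesis controls convergence only from points of \(\hat A\), and \(\nu\) need not be supported there, so your heuristic that ``slowly returning excursions would themselves slow the convergence'' does not by itself bridge the gap. The standard repair: since \(C\) is accessible one may enlarge \(m\) so that \(\nu(C) > 0\), then replace \(\nu\) by its conditioned version \(\nu(\cdot \cap C)/\nu(C)\) at the price of shrinking \(\varepsilon\); the regeneration measure is now supported in \(C \subseteq \hat A\), and a geometric bound on \(u(n) - u_\infty\) follows by integrating the uniform bound over \(\nu\). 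Three bookkeeping points then remain: the splitting is performed on the \(m\)-skeleton, so aperiodicity is needed both to make the renewal sequence aperiodic (as Kendall's theorem requires) and to transfer geometric ergodicity between skeleton and chain \parencite{meyn_markov_2009}; the atom must have positive stationary mass, which follows from \(\varepsilon\,\pi(C) > 0\); and \(\pi(A) > 0\) only makes \(A\) accessible because the invariant probability of a \(\phi\)-irreducible chain is a maximal irreducibility measure. With these additions your outline closes.
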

\begin{proof}[Proof of \cref{thm:geomerg}]
    The technical assumptions in \cref{prop:defgeomerg} hold since the state space is assumed Polish and \(K_\mathrm{MH}\) is geometrically ergodic.
    Previously we established for \(K_\mathrm{aug}\) the invariant distribution \(\pi Q\) and the inheritance of \(\phi\)-irreducibility (\cref{thm:irredaug}).

    We begin by establishing a ``data processing inequality'' in total variation for \(Q\).
    Total variation distance between two measures \(\phi, \psi\) on \((\mathsf X, \mathscr X)\) can be characterized with the help of their possible couplings \parencite[Theorem~5.2]{lindvall_lectures_1992} as
    \begin{equation*}
        \lVert \phi(\cdot) - \psi(\cdot) \rVert_\mathrm{TV} = \min_{X \sim \phi, Y \sim \psi} \Pr(X \neq Y)
    \end{equation*}
    where there exists a coupling for which the minimum is attained.
    (Total variation is here defined to be normalized to \([0,1]\).)
    Any coupling of \(\phi, \psi\) can be then used to construct a coupling of \(\phi Q, \psi Q\) by always taking the independent components \(U, \epsilon'\) the same (a trivial self-coupling) and using our proposal coupling \(q\) in \cref{ass:coupling}.
    As the proposal coupling is faithful, it follows that 
    \begin{align*}
        \lVert \phi Q(\cdot) - \psi Q(\cdot) \rVert_\mathrm{TV} &\le \min_{\substack{X \sim \phi, Y \sim \psi \\ (X^\circ, Y^\circ) \sim q(\cdot \xmid X, Y)}} \Pr(X \neq Y \lor X^\circ \neq Y^\circ) \\
        &= \min_{\substack{X \sim \phi, Y \sim \psi \\ (X^\circ, Y^\circ) \sim q(\cdot \xmid X, Y)}} \Pr(X \neq Y) = \lVert \phi(\cdot) - \psi(\cdot) \rVert_\mathrm{TV}.
    \end{align*}

    Now, since \(K_\mathrm{MH}\) is geometrically ergodic there exists a set \(A\) with \(\pi(A) > 0\) as given by \cref{prop:defgeomerg}.
    Let \(B = A \times A \times [0,1] \times [0,1]^{\mathbb N}\).
    Then \(\pi Q(B) > 0\) by the positivity assumption on the proposal \(q\).
    Hence, for any starting state \(y \in B\) we have a corresponding \(x' = \gamma(y) \in A\) from the map induced by the Dirac kernel \(R\).
    Fix \(n \in \mathbb N\).
    Then
    \begin{align*}
        \lVert K_\mathrm{aug}^n(\cdot \xmid y) - \pi Q(\cdot) \rVert_\mathrm{TV} &= \lVert (R K_\mathrm{MH}^{n-1} Q)(\cdot \xmid y) - \pi Q(\cdot) \rVert_\mathrm{TV} \\
        &= \lVert (K_\mathrm{MH}^{n-1} Q)(\cdot \xmid x') - \pi Q(\cdot) \rVert_\mathrm{TV} \\
        &\le \lVert K_\mathrm{MH}^{n-1}(\cdot \xmid x') - \pi(\cdot) \rVert_\mathrm{TV}  \\
        &\le \max\{C_{x'},1\} \rho^{n-1} \le \frac{\max\{C_{x'},1\}}{\rho_{x'}}\rho^n_{x'}
    \end{align*}
    using the geometric ergodicity of \(K_\mathrm{MH}\) to obtain the last line of inequalities.
    Hence \(K_\mathrm{aug}\) is geometrically ergodic by \cref{prop:defgeomerg}.
\end{proof}

\section{Proofs of Section~\ref{sec:main.tail}}\label{app:proofs.tail}
\begin{proof}[Proof of \cref{thm:weightmoments}]
We will make use of the ``score expansion'' of the acceptance probability derivative
\begin{equation*}%
    \frac{\partial}{\partial\theta}\alpha_\theta(x^\circ \xmid x) = \bm{1}_{\alpha_\theta(x^\circ \xmid x) < 1} \alpha_\theta(x^\circ \xmid x) \frac{\partial}{\partial \theta} \log \frac{g_\theta(x^\circ)q(x \xmid x^\circ)}{g_\theta(x)q(x^\circ \xmid x)}.
\end{equation*}
and with \(\alpha_\theta \le 1\) and the elementary inequality 
\((a+b)^2 \le 2a^2+2b^2\)
we can expand the weights into two terms as the proposal derivatives vanish:
\begin{align*}
    \E[\lvert W_n \rvert^2] &= \E\left[\left\lvert \frac{\partial}{\partial\theta}\alpha_\theta(X^\circ_n \xmid X_n) \right\rvert^2\right] \\
    &\le 2\E\left[\left\lvert \frac{\partial}{\partial\theta}\log g_\theta(X_n)\right\rvert^2\right] + 2\E\left[\alpha_\theta(X^\circ_n \vert X_n)\left\lvert \frac{\partial}{\partial\theta}\log g_\theta(X^\circ_n)\right\rvert^2\right].%
\end{align*}
The first term is a ``translated'' version of the Fisher information of \(\pi_\theta\) owing to the missing normalization constant \(\mathcal Z_\theta\), so by the triangle inequality
\begin{equation*}
    \left\lvert \frac{\partial}{\partial\theta}\log g_\theta(X_n)\right\rvert \le \left\lvert \frac{\partial}{\partial\theta}\log \frac{g_\theta(X_n)}{\mathcal Z_\theta}\right\rvert + \left\lvert\frac{\partial}{\partial\theta}\log \mathcal Z_\theta\right\rvert
\end{equation*}
and hence is finite by hypothesis.

The second term is bounded by the using the \emph{sub-invariance} of \(\pi_\theta\) for the continuous part of \(K_\mathrm{MH}\), that is
\(\iint f(x) \alpha_\theta(x^\circ \vert x)q(dx^\circ \vert x)\pi_\theta(dx) \le \int f(x^\circ) \pi_\theta(dx^\circ)\)
for non-negative \(f\) and hence
\begin{equation*}
    \int\left(\int \left\lvert \frac{\partial}{\partial\theta}\log g_\theta(x^\circ)\right\rvert^2 \alpha_\theta(x^\circ \vert x) q(dx^\circ \vert x)\right)\pi_\theta(dx)
    \le \int \left\lvert \frac{\partial}{\partial\theta}\log g_\theta(x^\circ)\right\rvert^2 \pi_\theta(dx^\circ)
\end{equation*}
and hence is also finite.

\end{proof}
\begin{proof}[Proof of \cref{thm:h}]
There are two cases depending on the version of \cref{ass:performance}.
If we have \cref{ass:performance.fbdd}, then using the boundedness of \(f\) we have immediately
\begin{equation*}
    \lvert h(\bm{X}_0^\rightarrow) \rvert \le \lvert W_0 \rvert \sum_{k=1}^{\tau_0-1} \left\lvert f(Y_{0,k}) - f(X_{k}) \right\rvert \le 2\lvert W_0 \rvert \tau_{0} \lVert f \rVert_\infty
\end{equation*}
and hence applying Hölder's inequality
\begin{equation*}
    \E\left[\lvert h(\bm{X}_0^\rightarrow) \rvert\right] \le 2 \lVert f \rVert_\infty \E[\lvert W_0 \rvert^2]^{1/2} \E[\lvert\tau_{0}\rvert^2]^{1/2} < \infty
\end{equation*}
by the assumptions and \cref{thm:weightmoments}.

If we have \cref{ass:performance.geom} the result requires more work.
    Let \(S_T^{(X)}(f) = \sum_{k=1}^{T} \lvert f(X_{k}) \rvert\) and \(S_T^{(Y)}(f) = \sum_{k=1}^{T} \lvert f(Y_{0,k}) \rvert\).
    By the triangle inequality and Hölder's inequality, we have
    \begin{equation*}
        \E\left[\lvert h(\bm{X}_0^\rightarrow) \rvert\right] \le \E[\lvert W_0 \rvert^2]^{1/2} \left(\E\left[(S_{\tau_0}^{(Y)}(f))^2\right]^{1/2} + \E\left[(S_{\tau_0}^{(X)}(f))^2\right]^{1/2}\right)
    \end{equation*}
    and the expectation of the squared weights is bounded by \cref{thm:weightmoments},
    so it remains to bound the expectations of the squared stopped sums.
    Under our hypotheses, the marginal chains for \(X\) and \(Y\) are geometrically ergodic MH chains, and without loss of generality, they have initial distributions \(\pi_\theta\) and \(\pi_\theta q\), respectively.
    Hence, the drift condition \eqref{eq:drift} holds.
    We also have \(f(x)^2 \le C_1 V(x)\).
    By Cauchy-Schwarz, the drift condition also holds for \(\sqrt V\) as
    \begin{equation*}
        K_\mathrm{MH} \sqrt{V(x)} \le \sqrt{K_\mathrm{MH} V(x)} \le \sqrt{\alpha V(x) + \beta} \le \sqrt{\alpha} \sqrt{V(x)} + \sqrt{\beta}.
    \end{equation*}
    In this setting, we can apply a result by \textcite[Theorem~3.1]{alsmeyer_existence_2003}, in a special form that follows from its proof in the case of exponent \(2\), to deduce for some \(C_2 > 0\) that
    \begin{align*}
        \E\left[S_{\tau_0}^{(X)}(f)^2\right] &\le C_2 \left(\E\left[S_{\tau_0}^{(X)}(V)\right] + \pi_\theta(V) + \E[\tau_0^2]\right),\\
        \E\left[S_{\tau_0}^{(Y)}(f)^2\right] &\le C_2 \left(\E\left[S_{\tau_0}^{(Y)}(V)\right] + \pi_\theta q (V) + \pi_\theta q K_\mathrm{MH} (V) + \E[\tau_0^2]\right).
    \end{align*}
    where in particular using the drift condition yields
    \begin{equation*}
        \pi_\theta q K_\mathrm{MH}(V) \le \alpha \pi_\theta q(V) + \beta < \infty.
    \end{equation*}
    Next, using a result by \textcite[Lemma~2.2]{moustakides_extension_1999} we have
    \begin{align*}
        \E\left[S_{\tau_0}^{(X)}(V)\right] &\le \frac{1}{1 - \alpha} \left(\pi_\theta(V) + \beta\E[\tau_0]\right) < \infty,\\
        \E\left[S_{\tau_0}^{(Y)}(V)\right] &\le \frac{1}{1 - \alpha} \left(\pi_\theta q(V) + \beta\E[\tau_0]\right) < \infty,
    \end{align*}
    both finite by hypothesis.
    The first statement follows.

    To obtain the remaining statements, apply the triangle inequality to \(\lvert h_m \rvert\) and note that \(\tau_0 \wedge m \le \tau_0\) allows using the same bounds as before to obtain integrable domination uniformly in \(m\); then use dominated convergence.
\end{proof}

\section{Proofs of Section~\ref{sec:main.limit}}\label{app:proofs.limit}
\begin{proof}[Proof of \cref{thm:unbiasedness}]
    Fix \(N \in \mathbb N\) and interior \(\theta_0 \in \Theta\).
    To avoid excessive indices, let \(\nu := \nu_{\theta_0}\) and let \(\E\) denote \(\E_{\Pr_\nu}\) as we will consider the path measure fixed and study the impact by perturbing the kernel itself. %
    By stationarity and linearity of expectation
    \begin{equation*}
        \left.\E_{X \sim \pi_\theta}[f(X)]\right|_{\theta=\theta_0} = \E\left[\frac 1N \sum_{n=0}^{N-1}f(X_n)\right] = \frac 1N \sum_{n=0}^{N-1}\E\left[f(X_k)\right]
    \end{equation*}
    and so using the linearity of derivatives we will consider each state separately.
    
    The dependence on \(\theta\) in \(K_\mathrm{aug}\) is entirely structural in the sense that perturbations come from acceptance probabilities in \(R\).
    If we consider
    \begin{equation}\label{eq:HMVD}
        (R_{\theta + \epsilon} - R_{\theta-\epsilon})f(x,x^\circ,u,\epsilon') = \begin{cases}
            f(x^\circ) - f(x) & \alpha_{\theta-\epsilon}(x^\circ \vert x) < u \le \alpha_{\theta+\epsilon}(x^\circ \vert x) \\
            f(x) - f(x^\circ) & \alpha_{\theta+\epsilon}(x^\circ \vert x) < u \le \alpha_{\theta-\epsilon}(x^\circ \vert x)\\
            0 & \text{otherwise}
        \end{cases}
    \end{equation}
    it is clear that a specific interval of \(u\) causes the path to diverge in a manner dependent on the sign of \(\frac{\partial \alpha}{\partial \theta}\).
    This will allow us to form events on which perturbations occur.

    Hence, we use a filtered Monte Carlo approach \parencite{fu_conditional_1997} with two versions of the chain running in parallel for \(\theta_0 \pm \epsilon\) according to the coupling we have defined in \eqref{eq:shadow}.
    Let \(A_k(\epsilon)\) be the event on which no perturbation occurs through the \(k\)th state, and let \(B_k(\epsilon)\) be the event on which the first perturbation occurs \emph{leaving} the \(k\)th state.
    By definition \(A_{k-1}(\epsilon) \setminus A_k(\epsilon) = B_{k - 1}(\epsilon)\).
    Hence \(B_0(\epsilon), B_1(\epsilon), \dotsc, B_{n-1}(\epsilon)\), and \(A_n(\epsilon)\) form a partition for any fixed \(n \in \mathbb N\).

    Fixing \(n \in \{1,\dotsc,N-1\}\), writing \(X_n^{(\theta_0+\epsilon)}, X_n^{(\theta_0-\epsilon)}\) for the alternative paths described above, we partition the derivative written as a central difference:
    \begin{align}
        \frac{\partial}{\partial \theta} \E\left[f(X_n)\right] &= \lim_{\epsilon \to 0} \frac{1}{2\epsilon}\E\left[f(X_n^{(\theta_0+\epsilon)})-f(X_n^{(\theta_0-\epsilon)})\right] \notag\\
        &= \lim_{\epsilon \to 0} \E\left[\frac{f(X_n^{(\theta_0+\epsilon)})-f(X_n^{(\theta_0-\epsilon)})}{2\epsilon} \bm{1}_{A_n(\epsilon)}\right] \label{eq:unbiasedness.IPA}\\
        &\quad + \sum_{k = 0}^{n-1}\lim_{\epsilon \to 0} \E\left[\left(f(X_n^{(\theta_0+\epsilon)})-f(X_n^{(\theta_0-\epsilon)})\right)\frac{\bm{1}_{B_k(\epsilon)}}{2\epsilon}\right]. \label{eq:unbiasedness.SPA}
    \end{align}
    In the first term \eqref{eq:unbiasedness.IPA}, where no perturbation occurs on the event, the difference is zero by definition.
    It remains to compute the limit in the second term \eqref{eq:unbiasedness.SPA}.
    Consider a fixed \(k \in \{0,1,\dotsc,n-1\}\), then condition on \(B_k(\epsilon)\) and the states \(X_k, X_k^\circ\) to obtain
    \begin{align*}
        &\E\left[\left(f(X_n^{(\theta_0+\epsilon)})-f(X_n^{(\theta_0-\epsilon)})\right)\frac{\bm{1}_{B_k(\epsilon)}}{2\epsilon}\right] \\
        &= \E\Bigg[\underbrace{\E\left[f(X_n^{(\theta_0+\epsilon)})-f(X_n^{(\theta_0-\epsilon)})\middle\vert B_k(\epsilon), X_k, X_k^\circ\right]}_{=: \Delta_{n, k, \epsilon}}\underbrace{\frac{\Pr(B_k(\epsilon) \vert X_k, X_k^\circ)}{2\epsilon}}_{=: P_{k,\epsilon}}\Bigg]
    \end{align*}

    First, we note that the conditional difference is integrably dominated, either trivially by \cref{ass:performance.fbdd} or by using the \(\theta\)-uniform drift condition \eqref{eq:drift} from \cref{ass:performance.geom} to obtain
    \begin{equation*}%
        \left\lvert f(X_n^{(\theta_0+\epsilon)})-f(X_n^{(\theta_0-\epsilon)}) \right\rvert \le 2C_f\left(\alpha^{n-k}(V(X_k^\circ) + V(X_k)) + \frac{\beta}{1-\alpha}\right).
    \end{equation*}
    The \as limit of the conditional difference is established by case analysis on the sign of \(\frac{\partial \alpha}{\partial \theta}\) and \(U_k\).
    By forcing the perturbation according to \eqref{eq:HMVD}, thus forcing \(B_k(\epsilon)\) to occur so that the limit is only in the dynamics, we establish for the different cases
    \begin{equation*}
        \begin{cases}
            -\E\left[f(Y_{k,n-k}^{(\theta_0-\epsilon)})-f(X_n^{(\theta_0+\epsilon)})\xmiddle| X_k, X_k^\circ\right] & \frac{\partial \alpha}{\partial \theta} > 0 \text{ and } U_k \le \alpha_{\theta_0}(X^\circ_k \xmid X_k)\\
            +\E\left[f(Y_{k,n-k}^{(\theta_0+\epsilon)})-f(X_n^{(\theta_0-\epsilon)})\xmiddle\vert X_k, X_k^\circ\right] & \frac{\partial \alpha}{\partial \theta} > 0 \text{ and } U_k > \alpha_{\theta_0}(X^\circ_k \vert X_k)\\
            +\E\left[f(Y_{k,n-k}^{(\theta_0+\epsilon)})-f(X_n^{(\theta_0-\epsilon)})\middle\vert X_k, X_k^\circ\right] & \frac{\partial \alpha}{\partial \theta} \le 0 \text{ and } U_k \le \alpha_{\theta_0}(X^\circ_k \vert X_k)\\
            -\E\left[f(Y_{k,n-k}^{(\theta_0-\epsilon)})-f(X_n^{(\theta_0+\epsilon)})\middle\vert X_k, X_k^\circ\right] & \frac{\partial \alpha}{\partial \theta} \le 0 \text{ and } U_k > \alpha_{\theta_0}(X^\circ_k \vert X_k)
        \end{cases}
    \end{equation*}
    all of which by conditional dominated convergence have the same limit except for the sign \(S_k = \left(1 - 2 \cdot \bm{1}_{U_k \le \alpha_{\theta_0}(X^\circ_k \vert X_k)}\right)\mathrm{sign}\left(\frac{\partial}{\partial\theta} \alpha_{\theta_0}(X^\circ_k \vert X_k)\right)\), which gives the expression
    \begin{equation*}
       \Delta_{n, k, \epsilon} \xrightarrow[\epsilon \to 0]{\text{\as}} \E\left[(f(Y_{k,n-k})-f(X_n)) S_k \middle\vert X_k, X_k^\circ\right].
    \end{equation*}

    For the second factor, the characterization in \eqref{eq:HMVD} and \cref{ass:target} imply
    \begin{equation*}
        P_{k,\epsilon} = \frac{\lvert \alpha_{\theta_0+\epsilon}(X^\circ_k \vert X_k) - \alpha_{\theta_0-\epsilon}(X^\circ_k \vert X_k)\rvert}{2\epsilon}
        \xrightarrow[\epsilon \to 0]{\text{\as}} \left\lvert\frac{\partial}{\partial\theta} \alpha_{\theta_0}(X^\circ_k \vert X_k) \right\rvert.
    \end{equation*}
    By moving the sign \(S_k\) to the second factor,
    this establishes that the \as limit of the product \(\Delta_{n,k,\epsilon}P_{k,\epsilon}\) is in fact \(\E\left[W_k\left(f(Y_{k,n-k}) - f(X_n)\right)\middle\vert X_k, X_k^\circ\right]\) with our previously defined \(W_k\).
    It remains to show that we can apply dominated convergence to pass to a limit in mean.

    Hence, we now establish that the product is uniformly integrable.
    We need to show
    \begin{equation*}
        \lim_{K \to \infty} \sup_{\epsilon \ge 0} \E\left[\lvert \Delta_{n,k,\epsilon}P_{k,\epsilon} \rvert \bm{1}_{\lvert \Delta_{n,k,\epsilon}P_{k,\epsilon} \rvert > K}\right] = 0 .
    \end{equation*}
    Apply Hölder's inequality for fixed \(K, \epsilon\) to obtain
    \begin{align*}
        &\E\left[\lvert \Delta_{n,k,\epsilon}P_{k,\epsilon} \rvert \bm{1}_{\lvert \Delta_{n,k,\epsilon}P_{k,\epsilon} \rvert > K}\right]\\
        &\quad\le \E\left[\lvert \Delta_{n,k,\epsilon}\rvert^{2+\gamma}\right]^{\frac{1}{2 + \gamma}} \E\left[\lvert P_{k,\epsilon} \rvert^2\right]^{\frac 12} \Pr(\lvert \Delta_{n,k,\epsilon}P_{k,\epsilon} \rvert > K)^{\frac{\gamma}{4 + 2\gamma}}.
    \end{align*}
    The first factor is uniformly bounded by the same argument as before; either trivially by \cref{ass:performance.fbdd} or by using the \(\theta\)-uniform drift condition \eqref{eq:drift} from \cref{ass:performance.geom} to obtain the integrable bound
    \begin{equation*}
        \left\lvert f(X_n^{(\theta_0+\epsilon)})-f(X_n^{(\theta_0-\epsilon)}) \right\rvert^{2 + \gamma} \le 2^{2+\gamma}C_f\left(\alpha^{n-k}(V(X_k^\circ) + V(X_k)) + \frac{\beta}{1-\alpha}\right).
    \end{equation*}
    The second factor is uniformly bounded by hypothesis; applying the mean value theorem \parencite[Theorem~1.2]{fu_conditional_1997} and using the uniformity in \cref{thm:weightmoments} we deduce
    \begin{equation*}
        \sup_{\epsilon \ge 0} \E\left[\lvert P_{k,\epsilon} \rvert^2\right]^{\frac 12} \le \E\left[\sup_{\theta \in \Theta} \lvert W_k \rvert^2\right]^{\frac 12} < \infty.
    \end{equation*}
    Finally, the third factor has
    \begin{equation*}
        \sup_{\epsilon \ge 0} \Pr(\lvert \Delta_{n,k,\epsilon}P_{k,\epsilon} \rvert > K)^{\frac{\gamma}{4 + 2\gamma}} \le \Pr\left(\sup_{\epsilon \ge 0} \lvert \Delta_{n,k,\epsilon}P_{k,\epsilon} \rvert > K\right)^{\frac{\gamma}{4 + 2\gamma}} \xrightarrow[K \to \infty]{} 0
    \end{equation*}
    and uniform integrability follows.
    
    Hence, summing over \(k\),
    \begin{equation*}
        \frac{\partial}{\partial \theta} \E\left[f(X_n)\right] = \E\left[\sum_{k = 0}^{n-1} W_k\left(f(Y_{k,n-k}) - f(X_n)\right)\right].
    \end{equation*}
    To recover the tail functionals, we sum over all \(n\) and transpose the indices from the average over past perturbations affecting the \(n\)th state to the average over future perturbations beginning from the \(k\)th state.
    Since no perturbation can occur for the initial state, and the transition out of the final state has no impact in this window, we obtain
    \begin{align*}
        \frac{\partial}{\partial \theta}\left.\E_{X \sim \pi_\theta}[f(X)]\right|_{\theta=\theta_0} &= \frac 1N \sum_{n=0}^{N-1}\frac{\partial}{\partial \theta}\E\left[f(X_n)\right]\\
        &= \E\left[\frac 1N \sum_{n=1}^{N-1} \sum_{k= 0}^{n-1} W_k\left(f(Y_{k,n-k}) - f(X_n)\right)\right]\\
        &= \E\left[\frac 1N \sum_{k = 0}^{N-2} \sum_{\ell=1}^{N-k-1} W_k\left(f(Y_{k,\ell}) - f(X_{k + \ell})\right)\right]\\
        &= \E\left[\frac 1N \sum_{k = 0}^{N-1} h_{N-k}(\bm{X}_k^\rightarrow)\right]
    \end{align*}
    as required.
\end{proof}

The usual Markov chain law of large numbers is stated for functions of the ``current'' state.
However, the general Birkhoff ergodic theorem also applies to tail functionals such as our \(h\) under consideration.
\begin{theorem}[\protect{\barecite[Theorems~5.1.8, 5.2.9]{douc_markov_2018}}]\label{thm:birkhoff}
    Let \(K\) be a Markov kernel on \(\mathsf Z\), which admits an invariant probability measure \(\nu\) such that \((\mathsf{Z}^{\mathbb N}, \mathscr{Z}^{\otimes\mathbb N}, \Pr_{\nu},T)\) is an ergodic dynamical system, where \(\Pr_\nu\) denotes the induced measure on sample paths following \(K\) started according to \(\nu\), and \(T\) is the sequence shift.
    Let \(h \in L^1(\Pr_{\nu})\).
    Then for \(\nu\)-almost every \(z \in \mathsf Z\),
    \begin{equation*}%
        \lim_{N \to \infty} \frac{1}{N} \sum_{n=0}^{N-1} h \circ T^n = \E_{\Pr_\nu}[h]\quad \Pr_z\text{-\as}
    \end{equation*}
    with convergence also in \(L^1(\Pr_\nu)\).
\end{theorem}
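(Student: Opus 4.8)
The plan is to treat this as the Birkhoff--Khinchin pointwise ergodic theorem for a measure-preserving system, together with the simplification afforded by ergodicity. I would first observe that invariance of \(\nu\) under \(K\) is exactly what makes the shift \(T\) a \(\Pr_\nu\)-preserving transformation of \((\mathsf Z^{\mathbb N}, \mathscr Z^{\otimes \mathbb N})\): the law of \((Z_0, Z_1, \dotsc)\) under \(\Pr_\nu\) is stationary, so \(\Pr_\nu \circ T^{-1} = \Pr_\nu\). This places us squarely in the setting of the classical pointwise ergodic theorem, and the work then splits into establishing pointwise convergence of the averages, identifying the limit via ergodicity, and transferring the conclusion from \(\Pr_\nu\) to the individual starting-state measures.

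The heart of the argument is convergence of the Birkhoff averages \(A_N h := \frac 1N \sum_{n=0}^{N-1} h \circ T^n\). I would establish this via the maximal ergodic inequality (Hopf's lemma), which controls \(\sup_N \lvert A_N h \rvert\) well enough to run the standard Garsia argument and conclude that \(A_N h \to \E_{\Pr_\nu}[h \mid \mathcal I]\) both \(\Pr_\nu\)-almost surely and in \(L^1(\Pr_\nu)\), where \(\mathcal I = \{B : T^{-1}B = B\}\) is the invariant \(\sigma\)-algebra. The \(L^1\) convergence follows by first treating bounded \(h\), where it is immediate from the almost-sure convergence and dominated convergence, and then extending to general \(h \in L^1(\Pr_\nu)\) by density, using that each \(A_N\) is an \(L^1\)-contraction since \(T\) preserves \(\Pr_\nu\).

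Ergodicity of the dynamical system means precisely that \(\mathcal I\) is \(\Pr_\nu\)-trivial, every invariant set having probability \(0\) or \(1\). Consequently \(\E_{\Pr_\nu}[h \mid \mathcal I]\) is \(\Pr_\nu\)-a.s.\ equal to its own expectation, namely the constant \(\E_{\Pr_\nu}[h]\); this identifies the limit and completes the stationary statement.

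It remains to upgrade the \(\Pr_\nu\)-almost-sure conclusion to the per-starting-state form asserted in the theorem. Disintegrating the path measure as \(\Pr_\nu(\cdot) = \int_{\mathsf Z} \Pr_z(\cdot)\, \nu(dz)\), where \(\Pr_z\) is the law of the chain started deterministically at \(z\), let \(G\) be the event that \(A_N h \to \E_{\Pr_\nu}[h]\). Then \(\Pr_\nu(G) = 1\) reads \(\int \Pr_z(G)\,\nu(dz) = 1\), and since each \(\Pr_z(G) \le 1\) this forces \(\Pr_z(G) = 1\) for \(\nu\)-almost every \(z\), which is exactly the claimed conclusion. The only genuine obstacle is the maximal ergodic inequality underpinning the pointwise theorem; once it (or the cited result of \textcite{douc_markov_2018}) is in hand, the ergodic reduction and the disintegration are routine.
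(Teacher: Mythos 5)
Your proof is correct, and it matches the approach of the source the paper itself relies on: the paper gives no proof of this statement, quoting it directly from \textcite[Theorems~5.1.8, 5.2.9]{douc_markov_2018}, whose argument is exactly the one you outline (shift-invariance of \(\Pr_\nu\) from stationarity, the maximal ergodic inequality and Garsia's argument giving \(A_N h \to \E_{\Pr_\nu}[h \mid \mathcal I]\) a.s.\ and in \(L^1\), triviality of \(\mathcal I\) under ergodicity, and disintegration \(\Pr_\nu = \int \Pr_z\, \nu(dz)\) to obtain the per-starting-state conclusion). The only point left implicit in your write-up is the measurability of \(z \mapsto \Pr_z(G)\) needed for the disintegration step, which is standard since \(G\) is a measurable subset of path space and \(z \mapsto \Pr_z\) is a kernel.
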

The infinite-horizon result also extends to the finite-horizon case by use of Maker's ergodic theorem:
\begin{theorem}[\protect{following \barecite[Theorem~10.4.1]{maker_ergodic_1940}; \barecite{hochman_notes_2017}}]\label{thm:maker}
    In the setting of \cref{thm:birkhoff}, let \(\sup_{m \in \mathbb N} \lvert h_m \rvert \in L^1(\Pr_{\nu})\) and \(h_m \to h\) \(\Pr_{\nu}\)-\as.
    Then for \(\nu\)-almost every \(z \in \mathsf Z\),
    \begin{equation*}%
        \lim_{N \to \infty} \frac{1}{N} \sum_{n=0}^{N-1} h_{N-n} \circ T^n = \E_{\Pr_\nu}[h]\quad \Pr_z\text{-\as}
    \end{equation*}
    with convergence also in \(L^1(\Pr_\nu)\).
\end{theorem}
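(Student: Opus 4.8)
The plan is to isolate the fixed-horizon Birkhoff average, for which \cref{thm:birkhoff} already gives $\frac1N\sum_{n=0}^{N-1} h\circ T^n \to \E_{\Pr_\nu}[h]$ both a.s. and in $L^1(\Pr_\nu)$, and then to show that replacing $h$ by $h_{N-n}$ in the $n$th summand introduces an error that vanishes. Writing $Z_N = \frac1N\sum_{n=0}^{N-1} h_{N-n}\circ T^n$, it suffices to prove $\frac1N\sum_{n=0}^{N-1}(h_{N-n}-h)\circ T^n\to 0$. The conceptual point, and the reason this is not a direct corollary of Birkhoff, is that the horizon $N-n$ takes every value from $N$ down to $1$ as $n$ runs over the window, so a bounded number of summands always use horizons close to $h_1$ rather than $h$; controlling exactly these low-horizon boundary terms is the crux of the argument.

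First I would introduce the monotone tail control $g_M = \sup_{j\ge M}|h_j-h|$. Since $h_m\to h$ $\Pr_\nu$-a.s.\ we have $g_M\downarrow 0$ a.s., while $g_M \le |h| + \sup_m|h_m| \le 2\sup_m|h_m|\in L^1(\Pr_\nu)$ by hypothesis, so dominated convergence gives $\E_{\Pr_\nu}[g_M]\to 0$. Fixing a threshold $M$, I would split the error sum into a bulk part $n\le N-M$ (horizon $N-n\ge M$) and a boundary part $n> N-M$ (at most $M-1$ summands). On the bulk, $|h_{N-n}-h|\circ T^n\le g_M\circ T^n$, so that part is dominated by $\frac1N\sum_{n=0}^{N-1} g_M\circ T^n$, which by \cref{thm:birkhoff} applied to $g_M\in L^1(\Pr_\nu)$ converges a.s.\ to $\E_{\Pr_\nu}[g_M]$.

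For the boundary part I would invoke the elementary lemma that $g\circ T^m/m\to 0$ a.s.\ for any $g\in L^1(\Pr_\nu)$, which follows from stationarity of $T$ together with Borel--Cantelli, since $\sum_m \Pr_\nu(|g|>\varepsilon m)\le \varepsilon^{-1}\E_{\Pr_\nu}|g|<\infty$. Applying this with $g=g_1$, each of the finitely many boundary summands satisfies $g_1\circ T^{N-i}/N\to 0$ a.s., so the entire boundary contribution vanishes. Combining the two parts yields $\limsup_N\frac1N\sum_{n=0}^{N-1}|h_{N-n}-h|\circ T^n\le \E_{\Pr_\nu}[g_M]$ a.s.\ for every fixed $M$; intersecting the countably many full-measure events and letting $M\to\infty$ drives the right-hand side to $0$, giving the a.s.\ statement.

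Finally, for the $L^1$ convergence I would use the pointwise domination $|Z_N|\le A_N := \frac1N\sum_{n=0}^{N-1}(\sup_m|h_m|)\circ T^n$. By \cref{thm:birkhoff}, $A_N$ converges in $L^1(\Pr_\nu)$ and is therefore uniformly integrable; since $\{|Z_N|>K\}\subseteq\{A_N>K\}$ with $|Z_N|\le A_N$ there, the family $\{Z_N\}$ inherits uniform integrability, and a.s.\ convergence together with uniform integrability yields convergence in $L^1(\Pr_\nu)$. I expect the boundary-term control---showing that the few low-horizon summands cannot accumulate---to be the main obstacle, with the tail-sup device $g_M$ and the $g\circ T^m/m\to0$ lemma being the tools that resolve it.
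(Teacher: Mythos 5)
Your proof is correct. Note that the paper does not actually prove this statement itself: it is quoted as a known result, with the argument deferred to the cited sources (Maker's original paper and Hochman's lecture notes). What you have written is essentially the classical proof of Maker's theorem that those citations point to --- split off the Birkhoff average of \(h\), control the error by the tail suprema \(g_M = \sup_{j \ge M}\lvert h_j - h\rvert\), use Birkhoff for \(g_M\) on the bulk, and kill the finitely many low-horizon boundary terms --- together with the routine uniform-integrability argument for the \(L^1\) statement. The one place you deviate from the standard presentation is the boundary control: you invoke the Borel--Cantelli lemma to get \(g_1 \circ T^m / m \to 0\) a.s., whereas the classical argument simply writes the boundary sum as the difference of two Birkhoff averages of \(g_1\) (over horizons \(N\) and \(N-M+1\)), both converging to \(\E_{\Pr_\nu}[g_1]\), so that the difference vanishes without any extra lemma. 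Both devices are valid; yours is marginally more self-contained in that it only uses Birkhoff for the bulk term, while the differencing trick avoids introducing a second limit theorem entirely. One cosmetic point: you work throughout under \(\Pr_\nu\), while the statement is phrased as ``for \(\nu\)-almost every \(z\), \(\Pr_z\)-a.s.''; the transfer is immediate from the disintegration \(\Pr_\nu = \int \Pr_z\, \nu(dz)\) (a \(\Pr_\nu\)-null set is \(\Pr_z\)-null for \(\nu\)-a.e.\ \(z\)), but it is worth a sentence.
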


\begin{corollary}[\protect{using \barecite[Proposition~17.1.6]{meyn_markov_2009}}]\label{thm:harrisae}
    If in \cref{thm:birkhoff,thm:maker} the kernel \(K\) is Harris recurrent, then ``\(\nu\)-almost every'' is strengthened to ``every'' \(z \in \mathsf Z\).
\end{corollary}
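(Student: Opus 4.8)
The plan is to recast the entire convergence statement as a single event on path space, note that \cref{thm:birkhoff,thm:maker} already grant it full probability from \(\nu\)-almost every start, and then remove the ``almost'' by a zero--one law for Harris chains. Write \(A_N(\omega)\) for the relevant average --- \(\frac 1N \sum_{n=0}^{N-1} h(T^n\omega)\) in the Birkhoff case and \(\frac 1N \sum_{n=0}^{N-1} h_{N-n}(T^n\omega)\) in the Maker case --- and set \(\Lambda = \{\omega : \lim_{N\to\infty} A_N(\omega) = \E_{\Pr_\nu}[h]\}\). By \cref{thm:birkhoff,thm:maker} we already have \(\Pr_z(\Lambda) = 1\) for \(\nu\)-almost every \(z \in \mathsf Z\); the task is to upgrade ``\(\nu\)-almost every'' to ``every''.

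The engine of the argument is that \(\psi(z) := \Pr_z(\Lambda)\) is a bounded harmonic function of \(K\). Indeed, conditioning on the first step and using the Markov property gives \((K\psi)(z) = \E_z[\Pr_{Z_1}(\Lambda)] = \Pr_z(T^{-1}\Lambda)\), so harmonicity reduces to the claim that \(\Pr_z(T^{-1}\Lambda) = \Pr_z(\Lambda)\) for every \(z\), i.e. that \(\Lambda \,\triangle\, T^{-1}\Lambda\) is \(\Pr_z\)-null uniformly in \(z\). Granting this, \(\psi\) is a \([0,1]\)-valued harmonic function, and since the chain is Harris recurrent its invariant \(\sigma\)-field is \(\Pr_z\)-trivial uniformly in the starting state, forcing every bounded harmonic function to be constant \barecite[Proposition~17.1.6]{meyn_markov_2009}; thus \(\psi \equiv c\). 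As \(\psi = 1\) on a set of full \(\nu\)-measure, the constant must be \(c = 1\), which yields \(\Pr_z(\Lambda) = 1\) for every \(z\) and closes the proof. In the Birkhoff case the near-invariance is immediate, since the summand \(h\) carries no horizon dependence and \(\Lambda\) is exactly shift-invariant.

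The main obstacle is therefore only the verification that \(\Lambda\) and \(T^{-1}\Lambda\) agree up to a null set in the Maker case, where the weights \(h_{N-n}\) depend on the horizon \(N\). Expanding \(A_N(T\omega)\) and matching it against \(A_{N+1}(\omega)\) gives the exact identity \(A_N(T\omega) = \tfrac{N+1}{N}A_{N+1}(\omega) - \tfrac 1N h_{N+1}(\omega)\). Let \(\mathsf G = \{\omega : \tau_n(\omega) < \infty \text{ for every } n \ge 0\}\); under \cref{ass:coupling} the recoupling times are \as finite from every starting state, so \(\Pr_z(\mathsf G) = 1\) for every \(z\), and on \(\mathsf G\) faithfulness makes \(h_m = h\) for all \(m \ge \tau_0\), whence \(h_{N+1}(\omega) \to h(\omega)\) is finite and the boundary term \(\tfrac 1N h_{N+1}(\omega)\) vanishes while \(\tfrac{N+1}{N} \to 1\). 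Consequently \(\lim_N A_N(T\omega) = \lim_N A_N(\omega)\) whenever either side exists, so \(\Lambda \,\triangle\, T^{-1}\Lambda \subseteq \mathsf G^{c}\) is \(\Pr_z\)-null for all \(z\), exactly as required above. The only care needed is in controlling this horizon-dependent boundary term; with that in hand the harmonic-function argument applies verbatim and the strengthening to every \(z\) follows.
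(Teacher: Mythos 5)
Your proof is correct and is essentially the paper's intended argument: the paper supplies no written proof beyond the citation to \barecite[Proposition~17.1.6]{meyn_markov_2009}, which is exactly the fact you invoke — Harris recurrence forces the bounded harmonic function \(z \mapsto \Pr_z(\Lambda)\) to be constant, and the constant must be \(1\) since it equals \(1\) for \(\nu\)-almost every \(z\). Your extra verification for the Maker case — that the convergence event is shift-invariant up to \(\Pr_z\)-null sets for \emph{every} \(z\), by controlling the horizon boundary term \(h_{N+1}/N\) through the \as finite recoupling times and faithfulness from \cref{ass:coupling} — is precisely the detail the paper leaves implicit, and you handle it correctly.
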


\begin{proof}[Proof of \cref{thm:consistency}]
    Under the hypotheses, convergence of the estimator \(\mathbb P_{\bm{X}_0}\)-\as and in \(L^1(\Pr_{\nu_{\theta_0}})\) follows by \cref{thm:harrisaug,thm:maker,thm:harrisae}.
    The sampling initialization regime is obvious by construction of \({\nu_{\theta_0}}\).
    It remains to prove that the constant limit \(\E_{\Pr_{\nu_{\theta_0}}}[h]\) is the correct one.
    However, under the hypotheses \cref{thm:unbiasedness} holds, and taking the limit as \(n \to \infty\) of both sides in \cref{thm:unbiasedness} yields
    \begin{equation*}
        \E_{\Pr_{\nu_{\theta_0}}}\left[h\right] = \left.\frac{\partial}{\partial \theta} \E_{X \sim \pi_\theta}[f(X)]\right|_{\theta = \theta_0}
    \end{equation*}
    as required.
\end{proof}

\begin{proof}[Proof of \cref{thm:clt}]
This is an almost\footnote{Although the theorem is stated for an Euclidean Borel state space, nothing in the proofs depends explicitly on these values and we may extend to our setting.} direct application of \textcite[Theorem~5]{ben_alaya_rate_1998}.
Consider first a central limit theorem for the infinite-horizon version \(h\).
The mixing condition follows by geometric ergodicity as \textcite[Section~6.1.1]{ben_alaya_rate_1998} implies the \(\alpha\)-mixing coefficients decay geometrically fast \(\Pr_{\nu_{\theta_0}}\)-\as, the moment conditions on the stopping time follow by \cref{ass:coupling} according to \textcite[Remark~3.1.1b]{ben_alaya_rate_1998}, and the moment conditions on the functional follow by hypothesis.
The result holds for any starting distribution by Harris recurrence from \cref{thm:harrisaug} using \textcite[Proposition~17.1.6]{meyn_markov_2009}. 

It remains to pass from an infinite horizon to a finite horizon.
Consider the difference
\begin{equation*}
    \frac{1}{\sqrt N}\sum_{n=0}^{N-1} \left(h_{N-n}(\bm{X}_n^\rightarrow) - h(\bm{X}_n^\rightarrow)\right) %
\end{equation*}
and observe that it is zero-mean in stationarity due to unbiasedness.
Furthermore, using the stationarity it follows (using the shift notation \(h(\bm{X}_n^\rightarrow) = h \circ T^n\) for conciseness)
\begin{align*}
    &\Var\left(\frac{1}{\sqrt N}\sum_{n=0}^{N-1} \left(h(\bm{X}_n^\rightarrow) - h_{N-n}(\bm{X}_n^\rightarrow)\right)\right) \\
    &= \frac{1}{N}\sum_{n=0}^{N-1} \Var\left(h - h_{N-n}\right)
    + \frac{2}{N}\sum_{n=0}^{N-1}\sum_{k=n+1}^{N-1} \Cov\left(h - h_{N-n},\middle. (h - h_{N-k}) \circ T^{k-n}\right)
    \\
     &= \frac{1}{N}\sum_{m=1}^{N} \Var\left(h - h_{m}\right) + \frac{2}{N}\sum_{m=1}^{N}\sum_{\ell=1}^{m-1} \Cov\left(h - h_{m},\middle. (h - h_{m-\ell}) \circ T^{\ell}\right).
\end{align*}
The first term tends to zero as \(N \to \infty\), since \(\Var(h - h_m) \to 0\) by hypothesis and it is the Cesàro mean of this sequence.
The second term also tends to zero as \(N \to \infty\), although the result is more complicated and requires a variant of \textcite[Theorem~3]{ben_alaya_rate_1998} with the uniform moment bound hypothesis to deduce
\begin{align*}
    \lvert \Cov\left(h - h_{m},\middle. (h - h_{m-\ell}) \circ T^{\ell}\right) \rvert \le A_m \alpha^{\gamma/(2+\gamma)}(\ell - [\ell/2]) + \frac{B_m}{[\ell/2]^{2(1 + \gamma)/(2 + \gamma)}}
\end{align*}
where \(\alpha\) denotes the aforementioned mixing coefficients, \(A_m = 16 \lVert h - h_m \rVert_{2 + \gamma} (\lVert h \rVert_{2+\gamma} + \sup_{n \in \mathbb N} \lVert h_n \rVert_{2+\gamma})\), and \(B_m = \lVert h - h_m \rVert_{2 + \gamma} (\lVert h \rVert_{2+\gamma} + \sup_{n \in \mathbb N} \lVert h_n \rVert_{2+\gamma}) \E(\tau^2)^{(1+\gamma)/(2+\gamma)}\).
Hence, the covariances are summable in \(\ell\), and it follows for the sum
\begin{align*}
    &\left\lvert \sum_{\ell = 1}^{m-1} \Cov\left(h - h_{m},\middle. (h - h_{m-\ell}) \circ T^{\ell}\right) \right\rvert \\
    &\quad\le A_m \sum_{\ell = 1}^{\infty} \alpha^{\gamma/(2+\gamma)}(\ell - [\ell/2]) + B_m \sum_{\ell = 1}^{\infty} [\ell / 2]^{-2(1 + \gamma)/(2 + \gamma)} \xrightarrow[m \to \infty]{} 0
\end{align*}
as \(A_m, B_m \to 0\) by hypothesis.
Hence the second term is also a Cesàro mean converging to zero.
Since the residuals are zero-mean, this shows that they converge to zero in probability, hence vanishing in the distributional limit, and the statement follows.
\end{proof}

\section{Parameter-dependent proposals}\label{supp:parameterdependent}

Using a pathwise estimator (that is, a reparameterization trick, see \barecite{mohamed_monte_2020}) for parameter-dependent proposals accounts for the additional sensitivity, at the cost of introducing new assumptions on the structure of the state space, the existence of gradients of \(f\) and \(\alpha_\theta\), as well as integrability conditions on the pathwise estimator.
This is the approach taken in \texttt{StochasticAD.jl}, as it is a natural extension of automatic differentiation to continuous random variables.

First, an additional weight term in \eqref{eq:weight} is required to account for the sensitivity of the states to \(\theta\), yielding the total derivative chain rule
    \begin{equation*}
        \Tilde{W}_n = W_n + \left(\frac{\partial \alpha_\theta}{\partial x^\circ}(X^\circ_n \mid X_n)\frac{d X_n^\circ}{d \theta} + \frac{\partial \alpha_\theta}{\partial x}(X^\circ_n \mid X_n)\frac{d X_n}{d \theta}\right) \left(1 - 2 \cdot \bm{1}_{U_n \le \alpha_\theta(X^\circ_n \vert X_n)}\right)
    \end{equation*}
where \(\frac{d X_n}{d \theta}, \frac{d X_n^\circ}{d \theta}\) denote the pathwise gradient estimators.
Under additional assumptions on the acceptance probability derivatives in \(x,x^\circ\) similar to \cref{ass:target} as well as moment conditions on the pathwise gradient estimators, one may obtain a new version of \cref{thm:weightmoments}.
Then, \eqref{eq:unbiasedness.IPA} now converges to \(\E[\langle\nabla f(X_k), \frac{d X_k}{d \theta}\rangle]\) by a dominated convergence argument \parencite[Section~1.2]{fu_conditional_1997} and one obtains the extended estimator.
Unfortunately, in general, the pathwise estimator of a single sample could depend on the whole past chain, and the existence of moments of \(\Tilde{W}_n\) is nontrivial, so any consistency argument will likely be ad-hoc.

\subsection{Proposal kernel tuning}\label{supp:proposaltuning}
We apply the above framework to the problem in \cref{sec:proposaltuning}.
In this special case, we will see that significant simplifications are possible.
The Gaussian random walk proposal \(X^\circ \vert X = x \sim \mathsf{N}(x, \theta^2)\) has a direct reparameterization using a latent \(Z \sim \mathsf{N}(0,1)\) as \(X^\circ(X, Z) = X + \theta Z\).
The pathwise derivative estimator for the proposal thus becomes the recursion
\(\frac{d X^\circ}{d \theta} = \frac{d X}{d \theta} + Z\).

First, we consider the contribution from \eqref{eq:unbiasedness.IPA}.
The recursion suggests the pathwise estimator in stationarity should be \(\frac{d X}{d \theta} = \frac{X}{\theta}\).
Indeed, if we take \(\frac{d X_0}{d \theta} = \frac{X_0}{\theta}\) then we have a direct rescaling of the original MH chain and
\begin{equation*}
    \frac{dX_k}{d\theta}(X_k) = \frac{1}{\theta} X_k,\quad \frac{d X^\circ_k}{d \theta}(X^\circ_k) = \frac{1}{\theta} X_k^\circ
\end{equation*}
where we can write the derivative estimates as functions of the current state,
allowing us to trivially transfer the ergodicity properties of the augmented chain to the sequence of pathwise estimators.
Hence the pathwise derivative appearing in \(\widehat{\frac{\partial \gamma_1}{\partial \theta}}\) is
\begin{equation*}
    \frac{d}{d\theta}(X_k X_{k+1}) = \frac{d X_k}{d \theta}X_{k+1} + X_k\frac{d X_{k+1}}{d \theta} = \frac{2}{\theta} X_k X_{k+1}
\end{equation*}
with consistency for this term following from the usual ergodic theorems, as long as the target has second moments.
In actual simulation, we may then start with \(\frac{d X_0}{d \theta} = 0\) as would be implied by a constant starting state and still have a guarantee that this term converges.

Second, we consider the contribution from \eqref{eq:unbiasedness.SPA}.
Recall that \(g\) denotes the unnormalized target density and observe that we, using the symmetry of the proposal, obtain the acceptance probability derivative following the previous discussion
\begin{align*}
    \frac{d}{d\theta} \alpha_\theta(X_n^\circ \vert X_n) &= \frac{\partial \alpha_\theta}{\partial x^\circ}(X_n^\circ \vert X_n) \frac{d X_n^\circ}{d\theta} + \frac{\partial \alpha_\theta}{\partial x}(X_n^\circ \vert X_n) \frac{d X_n}{d\theta} \\
    &= \bm{1}_{\alpha_\theta(X_n^\circ \vert X_n) < 1} \frac{\alpha_\theta(X_n^\circ \vert X_n)}{\theta} \left(\frac{\partial}{\partial x}\left[\log g(X_n^\circ)\right]X_n^\circ - \frac{\partial}{\partial x}\left[\log g(X_n)\right]X_n\right)
\end{align*}
for which the analogue of \cref{thm:weightmoments} follows if \(\E_{X \sim \pi}[\lvert X \frac{\partial}{\partial x} \log g(X) \rvert^2] < \infty\).
For the second moment to exist of \(f(X_k,X_{k+1}) = X_k X_{k+1}\) (to conclude \cref{thm:h}) it is clear that it is sufficient that the target distribution possesses fourth moments.
Then, following the discussion on performance functionals dependent on multiple states, we will have consistency arguing similarly to the proofs of \cref{thm:unbiasedness,thm:consistency}.
Summing these two terms together to form the estimator, unbiasedness follows from the previous discussion, and consistency follows from the aforementioned arguments.

The multidimensional case is handled analogously; the entries of the cross-covariance matrix are essentially component-wise products, so the derivative of the determinant can be related to the derivative of each entry by Jacobi's formula.
Similarly, a full parameter-dependent covariance matrix will also lead to a (more complicated) rescaling of the original chain as the pathwise estimator.

\FloatBarrier%
\section{MCMC Diagnostics for Section~\ref{sec:proposaltuning}}\label{app:proposaltuning.diag}
The diagnostics for the final proposals in \cref{sec:proposaltuning} (and comparisons to the naïve manual tuning) were obtained by running 4 chains of 250\,000 steps starting from the same state as the optimizer.
The definitions of effective sample size (ESS) and \(\hat R\) follow those of \textcite{vehtari_rank-normalization_2021}.

\begin{table}[h]
    \centering
    \caption{Gaussian with unit variance and correlation \(0.5\), suggested proposal. Final acceptance rate 0.354.}
    \label{tab:proposaltuning.gauss}
    \begin{tabular}{r S[table-format=1.4] S[table-format=1.4] S[table-format=1.4] S[table-format=6.4] S[table-format=6.4] S[table-format=1.4]}
    \toprule
     Param. & {Mean} & {Std.} & {MC s.e.} & {ESS (bulk)} & {ESS (tail)} & {\(\hat R\)} \\\midrule
     \(x_1\) & -0.0004 & 0.9995 & 0.0028 & 128529.3669 & 171540.5235 & 1.0000 \\
     \(x_2\) & 0.0033 & 0.9976 & 0.0027 & 136899.0162 & 177234.6313 & 1.0000 \\\bottomrule
    \end{tabular}
\end{table}
\begin{table}[h]
    \centering
    \caption{Dual Moon, suggested proposal. Final acceptance rate 0.166.}
    \label{tab:proposaltuning.dualmoon}
    \begin{tabular}{r S[table-format=1.4] S[table-format=1.4] S[table-format=1.4] S[table-format=6.4] S[table-format=6.4] S[table-format=1.4]}
    \toprule
     Param. & {Mean} & {Std.} & {MC s.e.} & {ESS (bulk)} & {ESS (tail)} & {\(\hat R\)} \\\midrule
     \(x_1\) & -0.0036 & 1.6003 & 0.0085 & 39979.9761 & 82117.3025 & 1.0001 \\
     \(x_2\) & -0.0016 & 1.6020 & 0.0083 & 42611.9025 & 86820.3302 & 1.0001 \\\bottomrule
    \end{tabular}
\end{table}
\begin{table}[h]
    \centering
    \caption{Dual Moon, naïve proposal. Final acceptance rate 0.234.}
    \label{tab:proposaltuning.dualmoon.naive}
    \begin{tabular}{r S[table-format=1.4] S[table-format=1.4] S[table-format=1.4] S[table-format=6.4] S[table-format=6.4] S[table-format=1.4]}
    \toprule
     Param. & {Mean} & {Std.} & {MC s.e.} & {ESS (bulk)} & {ESS (tail)} & {\(\hat R\)} \\\midrule
     \(x_1\) & -0.0025 & 1.6047 & 0.0110 & 25431.7478 & 90642.5985 & 1.0001 \\
     \(x_2\) & -0.0043 & 1.5966 & 0.0112 & 23931.9972 & 88170.5031 & 1.0002 \\\bottomrule
    \end{tabular}
\end{table}
\begin{table}[h]
    \centering
    \caption{Rosenbrock-type, suggested proposal. Final acceptance rate 0.156.}
    \label{tab:proposaltuning.rosenbrock}
    \begin{tabular}{r S[table-format=1.4] S[table-format=1.4] S[table-format=1.4] S[table-format=6.4] S[table-format=6.4] S[table-format=1.4]}
    \toprule
     Param. & {Mean} & {Std.} & {MC s.e.} & {ESS (bulk)} & {ESS (tail)} & {\(\hat R\)} \\\midrule
     \(x_1\) & -0.0049 & 0.4492 & 0.0029 & 24031.4766 & 18120.7512 & 1.0002 \\
     \(x_2\) &  0.2019 & 0.3003 & 0.0025 & 27835.1758 & 15459.7793 & 1.0002 \\\bottomrule
    \end{tabular}
\end{table}
\begin{table}[h]
    \centering
    \caption{Rosenbrock-type, naïve proposal. Final acceptance rate 0.233.}
    \label{tab:proposaltuning.rosenbrock.naive}
    \begin{tabular}{r S[table-format=1.4] S[table-format=1.4] S[table-format=1.4] S[table-format=6.4] S[table-format=6.4] S[table-format=1.4]}
    \toprule
     Param. & {Mean} & {Std.} & {MC s.e.} & {ESS (bulk)} & {ESS (tail)} & {\(\hat R\)} \\\midrule
     \(x_1\) & -0.0011 & 0.4497 & 0.0034 & 17109.1143 & 18581.6855 & 1.0002 \\
     \(x_2\) &  0.2023 & 0.3013 & 0.0025 & 29602.4486 & 16926.9068 & 1.0002 \\\bottomrule
    \end{tabular}
\end{table}

\end{appendix}

\begin{acks}[Acknowledgments]
We warmly thank Paolo Ceriani, Adrien Corenflos, Peter Glynn, Bernd Heidergott, Alexander Lew, Frank Schäfer, and Aila Särkkä for fruitful discussions.\end{acks}

\bibliographystyle{imsart-number}
\bibliography{DMH}

\end{document}